\numberwithin{equation}{section}
\newcommand{\mcal}[1]{\mathcal{#1}}
\DeclareMathOperator{\Tr}{Tr}
\def\diag{\mathrm{diag}}
\def\Wg{\mathrm{Wg}}
\def\({ \left( }
\def\){ \right)}
\def\trans#1{\mathord{#1}^{\mathrm{t}}}
\theoremstyle{plain}
\newtheorem{thm}{Theorem}[section]
\newtheorem{prop}[thm]{Proposition}
\newtheorem{lem}[thm]{Lemma}
\theoremstyle{definition}
\newtheorem{example}{Example}[section]
\newtheorem{remark}{Remark}[section]
\theoremstyle{conjecture}
\theoremstyle{problem}
\title{\textbf{Integration of invariant matrices and application to statistics}}
\author{\textsc{Beno\^{i}t Collins, Sho Matsumoto, and Nadia Saad}}
\date{\empty}
\begin{document}

\maketitle

\begin{abstract}
We consider random matrices that have invariance properties under the action of unitary groups (either a left-right invariance, or a conjugacy invariance), and we give formulas for moments in terms of functions of eigenvalues.
Our main tool is the Weingarten calculus.
As an application to statistics, we obtain new formulas for the pseudo inverse of Gaussian matrices and for the inverse of compound Wishart matrices.
\end{abstract}

\tableofcontents

\section{Introduction}

Wishart matrices have been introduced and studied for the first time for statistical purposes in \cite{Wishart},
and they are still a fundamental random matrix model related to theoretical statistics.
One generalization of Wishart matrices
is compound Wishart matrices which are studied, for example, in \cite{speicher,hiai-petz}. Compound Wishart matrices
appear in many topics such as the in the context of spiked random matrices.

The study of eigenvalues of Wishart matrices is quite well developed but a systematic of the joint moments
 of their entries (that we will call \emph{local moments}) is more recent.
On the other hand, the theoretical study of the inverse of Wishart matrices is also very important, in particular
for mathematical finance purposes, as shown in (\cite{CarvWest} and \cite{CarvMasWest})

However, the study of their local moments is much more recent, and was actually still open in the case
of the inverse of the compound Wishart matrix.

The aim of this paper is to provide a unified approach to the problem of computing the local moments
of the above random matrix models. We actually tackle a much more general setup, where we consider
any random matrix provided that its distribution has an appropriate invariance property (orthogonal or unitary)
 under an appropriate action (by conjugation, or by a left-right action).

Our approach is based on Weingarten calculus.
This tool is used to compute the local moments of random matrices
distributed according to the Haar measure on compact groups such as the unitary or the orthogonal group.

It was introduced in \cite{We78} and then improved many times, with a first complete description in \cite{Co03,CS06}.

In this paper, we need to introduce a modification of the Weingarten function, namely, a `double' Weingarten function
with two dimension parameters instead of one. As far as we know it is the first time that such a double-parameter
Weingarten function is needed. Beyond proving to be efficient in computing systematically moments, we believe that
it will turn out to have important theoretical properties.

As an interesting byproduct of our study - and as a preliminary to the solution of our problem of computing
the moments of the inverse of a compound Wishart random matrix, we obtain explicit moment formulas for
the pseudo-inverse of Ginibre random matrices.

This paper is organized as follows. In section \ref{sec:preliminary}, we recall known results about the moments of Wishart matrices,
their inverses, and about Weingarten calculus. Section \ref{sec:Invariance} is devoted to the computation of moments of
general invariant random matrices, and in section \ref{sec:application}, we solve systematically the problem of computing moments
of inverses of compound Wishart matrices.

\section{Preliminary}\label{sec:preliminary}

\subsection{Notation}

\subsubsection{Notation in the complex case} \label{sec:NotationC}

Let $k$ be a positive integer.
A partition of $k$ is a weakly decreasing sequence $\lambda=(\lambda_1,\dots,\lambda_l)$
of positive integers with $\sum_{i=1}^l \lambda_i=k$.
We write the length $l$ of $\lambda$ as $\ell(\lambda)$.

Let $S_k$ be the symmetric group acting on $[k]=\{1,2,\dots,k\}$.
A permutation $\pi \in S_k$ is decomposed into cycles.
If the numbers of lengths of cycles are $\mu_1 \ge \mu_2 \ge \dots \ge \mu_l$,
then the sequence $\mu=(\mu_1,\mu_2,\dots,\mu_l)$ is a partition of $k$.
We will refer to $\mu$ as the cycle-type of $\pi$.
Denote by $\kappa(\pi)$ the length $\ell (\mu)$ of the cycle-type of $\pi$, or equivalently
the number of cycles of $\pi$.

For two sequences $\bm{i}=(i_1,\dots,i_k)$ and $\bm{i}'=(i_1',\dots,i_k')$ of positive
integers and for a permutation $\pi \in S_k$, we put
$$
\delta_{\pi}(\bm{i},\bm{i}')= \prod_{s=1}^k \delta_{i_{\pi(s)}, i_{s}'}.
$$
Given a square matrix $A$ and a permutation
$\pi \in S_k$ of cycle-type $\mu=(\mu_1,\dots,\mu_l)$,
we define
$$
\Tr_\pi(A)= \prod_{j=1}^l \Tr (A^{\mu_j}).
$$

\begin{example} \label{example:unitary_notation}
Let
$\pi= \(\begin{smallmatrix} 1 & 2 & 3 & 4 & 5 & 6 & 7 & 8 \\
2 & 5 & 4 &3 &1 & 8 & 7 & 6 \end{smallmatrix}\) \in S_8$.
Then $\pi$ is decomposed as $\pi=(1 \ 2 \ 5)(3 \ 4)(6 \ 8)(7)$
and the cycle-type of $\pi$ is the partition $(3,2,2,1)$, which gives
$\kappa(\pi)=4$ and
$\Tr_\pi(A)=\Tr (A^3) \Tr(A^2)^2 \Tr(A)$.
\end{example}

\subsubsection{Notation in the real case}  \label{sec:NotationR}

Given $\sigma \in S_{2k}$, we attach an undirected graph $\Gamma(\sigma)$ with vertices
$1,2,\dots, 2k$ and edge set consisting of
$$
\big\{ \{2i-1,2i\} \ | \ i =1,2,\dots,k \big\} \cup
\big\{ \{\sigma(2i-1),\sigma(2i)\} \ | \ i =1,2,\dots,k \big\}.
$$
Here we distinguish every edge $\{2i-1,2i\}$ from $\{\sigma(2j-1),\sigma(2j)\}$
even if these pairs coincide.
Then each vertex of the graph lies on exactly two edges, and
the number of vertices in each connected component is even.
If the numbers of vertices are
$2\mu_1 \ge 2\mu_2 \ge \dots \ge 2\mu_l$ in the components,
then the sequence $\mu=(\mu_1,\mu_2,\dots,\mu_l)$ is a partition of $k$.
We will refer to the $\mu$ as the coset-type of $\sigma$, see \cite[VII.2]{Macdonald}
for detail.
Denote by $\kappa'(\sigma)$ the length $l(\mu)$ of the coset-type of $\sigma$, or equivalently
the number of components of $\Gamma(\sigma)$.

Let $M_{2k}$ be the set of all pair partitions of the set $[2k]=\{1,\dots,2k\}$.
A pair partition $\sigma \in M_{2k}$ can be uniquely expressed in the form
$$
\sigma= \big\{ \{ \sigma(1), \sigma(2)\},\{ \sigma(3),\sigma(4)\}, \dots,
\{\sigma(2k-1), \sigma(2k)\} \big\}
$$
with $1=\sigma(1)<\sigma(3)<\cdots< \sigma(2k-1)$ and $\sigma(2i-1)<\sigma(2i)$ $(1 \le i \le k)$.
Then $\sigma$ can be regarded as a permutation
$\( \begin{smallmatrix} 1 & 2 & \dots & 2k \\ \sigma(1) & \sigma(2) & \dots & \sigma(2k) \end{smallmatrix}\)$ in $S_{2k}$.
We thus embed $M_{2k}$ into $S_{2k}$.
In particular, the coset-type and the value of $\kappa'$ for $\sigma \in M_{2k}$ are defined.

For a permutation $\sigma \in S_{2k}$ and
a $2k$-tuple $\bm{i}=(i_1, i_2,\dots, i_{2k})$ of positive integers,
we define
$$
\delta'_{\sigma}(\bm{i})= \prod_{s=1}^{k} \delta_{i_{\sigma(2s-1)},i_{\sigma(2s)}}.
$$
In particular, if $\sigma \in M_{2k}$, then $\delta'_{\sigma}(\bm{i})= \prod_{\{a,b\} \in \sigma}
\delta_{i_a,i_b}$,
where the product runs over all pairs in $\sigma$.
For a square matrix $A$ and $\sigma \in S_{2k}$ with coset-type $(\mu_1, \mu_2, \dots, \mu_l)$, we define
$$
\Tr_{\sigma}'(A)=\prod_{j=1}^l \Tr (A^{\mu_j}).
$$

\begin{example}
Let
$\pi= \(\begin{smallmatrix} 1 & 2 & 3 & 4 & 5 & 6 & 7 & 8 \\
2 & 5 & 4 &3 &1 & 8 & 7 & 6 \end{smallmatrix}\) \in S_8$
as in Example \ref{example:unitary_notation}.
Then the coset-type of $\pi$ is the partition $(3,1)$, which gives
$\kappa'(\pi)=2$ and
$\Tr_\pi'(A)=\Tr (A^3)\Tr(A)$.
\end{example}

\subsection{Weingarten calculus}\label{sec:Wg}

\subsubsection{Unitary Weingarten calculus}

We review some basic material on unitary integration and unitary Weingarten function. A more complete exposition of these matters can be found in \cite{Co03, CS06, MN}.
We use notation defined in \S \ref{sec:NotationC}.

Let $L(S_{k})$ be the algebra of complex-valued functions on $S_k$ with  convolution
$$
(f_1 *f_2) (\pi) = \sum_{ \tau \in S_k} f_1(\tau) f_2(\tau^{-1} \pi) \qquad (
f_1,f_2 \in L(S_k), \ \pi \in S_k).
$$
The identity element in the algebra $L(S_k)$ is the Dirac function $\delta_e$
at the identity permutation $e=e_k \in S_k$.

Let $z$ be a complex number and consider the function $z^{\kappa(\cdot)}$ in $L(S_k)$ defined by
$$
S_k \ni \pi \mapsto z^{\kappa(\pi)} \in \mathbb{C},
$$
which belongs to the center $\mcal{Z}(L(S_k))$ of $L(S_k)$.
The \emph{unitary Weingarten function}
$$
S_k \ni \pi \mapsto \Wg^{\mathrm{U}} (\pi;z) \in \mathbb{C}
$$
is, by definition, the pseudo-inverse element of $z^{\kappa(\cdot)}$ in $\mcal{Z}(L(S_k))$,
i.e.,
the unique element in $\mcal{Z}(L(S_k))$ satisfying
$$
z^{\kappa(\cdot)} * \Wg^{\mathrm{U}}(\cdot;z) * z^{\kappa(\cdot)} = z^{\kappa(\cdot)}
\qquad \text{and} \qquad \Wg^{\mathrm{U}}(\cdot ;z) *z^{\kappa(\cdot)} * \Wg^{\mathrm{U}}(\cdot ;z)=
\Wg^{\mathrm{U}}(\cdot ;z).
$$

The expansion of the unitary Weingarten function in terms of irreducible characters $\chi^\lambda$
of $S_k$ is given by
$$
\Wg^{\mathrm{U}} (\pi;z) = \frac{1}{k!}
\sum_{\begin{subarray}{c} \lambda \vdash k \\ C_{\lambda}(z) \not= 0 \end{subarray}}
\frac{f^\lambda}{C_\lambda(z)} \chi^\lambda(\pi) \qquad (\pi \in S_{k}),
$$
summed over all parititons $\lambda$ of $k$ satisfying $C_\lambda(z) \not =0$.
Here $f^\lambda=\chi^\lambda(e)$ and
$$
C_\lambda(z)= \prod_{i=1}^{\ell(\lambda)} \prod_{j=1}^{\lambda_i} (z+j-i).
$$
In particular, unless $z \in \{0, \pm 1, \pm 2, \dots, \pm (k-1)\}$,
functions $z^{\kappa(\cdot)}$ and $\Wg^{\mathrm{U}}(\cdot ;z)$ are inverse of each other and
satisfy $z^{\kappa(\cdot)} * \Wg^{\mathrm{U}} (\cdot;z) = \delta_e$.

\begin{prop}[\cite{Co03}]
\label{prop:UnitaryWeingartenCalculus}
Let $U = (u_{ij})_{1\leq i,j\leq n}$ be an $n \times n$ Haar-distributed unitary matrix. For four sequences
$\bm{i} = (i_1, i_2, \dots , i_{k})$, $\bm{j} = (j_1, j_2,\dots, j_{k})$,
$\bm{i}' = (i_1', i_2', \dots , i_{k}')$, $\bm{j}' = (j_1', j_2',\dots, j_{k}')$
of positive integers in $[n]$,
we have
\begin{equation}
E[ u_{i_1j_1} \dots u_{i_{k}j_{k}} \overline{u_{i'_{1}j'_{1}} \cdots u_{i'_{k}j'_{k}}}] = \sum_{\sigma, \tau \in S_k}
\delta_{\sigma}(\bm{i},\bm{i}')
\delta_{\tau}(\bm{j},\bm{j}') \Wg^{\mathrm{U}}(\sigma^{-1}\tau;n).
\end{equation}
\end{prop}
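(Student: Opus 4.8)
The plan is to realize the left-hand side as the matrix of a single averaging operator and to identify that operator with an orthogonal projection whose range is controlled by Schur--Weyl duality. Write $V=\mathbb{C}^{n}$ and let $U^{\otimes k}$ act on $V^{\otimes k}$, so that $(U^{\otimes k})_{\bm{i},\bm{j}}=u_{i_{1}j_{1}}\cdots u_{i_{k}j_{k}}$ and $\overline{(U^{\otimes k})_{\bm{i}',\bm{j}'}}=((U^{\otimes k})^{*})_{\bm{j}',\bm{i}'}$. Thus the quantity to be computed is
\[
\Phi_{(\bm{i},\bm{i}'),(\bm{j},\bm{j}')}=E\big[(U^{\otimes k})_{\bm{i},\bm{j}}\,\overline{(U^{\otimes k})_{\bm{i}',\bm{j}'}}\big],
\]
and for any $X\in\mathrm{End}(V^{\otimes k})$ one checks directly that $\sum_{\bm{j},\bm{j}'}\Phi_{(\bm{i},\bm{i}'),(\bm{j},\bm{j}')}X_{\bm{j},\bm{j}'}$ equals the $(\bm{i},\bm{i}')$ entry of $\mathcal{E}(X):=\int_{U(n)}U^{\otimes k}\,X\,(U^{\otimes k})^{*}\,dU$. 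Hence $\Phi$ is exactly the matrix of the linear map $\mathcal{E}$ on $\mathrm{End}(V^{\otimes k})$, and the whole problem reduces to describing $\mathcal{E}$.

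First I would observe that $\mathcal{E}$ is the orthogonal projection, with respect to the Hilbert--Schmidt inner product $\langle A,B\rangle=\Tr(AB^{*})$, onto the commutant $\mathcal{A}=\{U^{\otimes k}:U\in U(n)\}'$. Indeed, bi-invariance of the Haar measure shows $\mathcal{E}$ is idempotent and self-adjoint with range contained in $\mathcal{A}$, and it fixes $\mathcal{A}$ pointwise. By Schur--Weyl duality, $\mathcal{A}$ is spanned by the permutation operators $\rho(\sigma)$, $\sigma\in S_{k}$, that permute the tensor legs; with the convention $\rho(\sigma)(v_{1}\otimes\cdots\otimes v_{k})=v_{\sigma^{-1}(1)}\otimes\cdots\otimes v_{\sigma^{-1}(k)}$ one has $(\rho(\sigma))_{\bm{i},\bm{i}'}=\delta_{\sigma}(\bm{i},\bm{i}')$.

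Next I would write the projection $\mathcal{E}$ explicitly in the (in general linearly dependent) spanning family $\{\rho(\sigma)\}$. The Gram matrix is $G_{\sigma\tau}=\langle\rho(\sigma),\rho(\tau)\rangle=\Tr(\rho(\sigma\tau^{-1}))=n^{\kappa(\sigma\tau^{-1})}$, using $\rho(\sigma)\rho(\tau)=\rho(\sigma\tau)$, $\rho(\tau)^{*}=\rho(\tau^{-1})$, and $\Tr\rho(\pi)=n^{\kappa(\pi)}$. The orthogonal projection onto the span of a family with Gram matrix $G$ is $\mathcal{E}(X)=\sum_{\sigma,\tau}\rho(\sigma)\,(G^{+})_{\sigma\tau}\,\langle\rho(\tau),X\rangle$, where $G^{+}$ is the Moore--Penrose pseudo-inverse. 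Reading off the $(\bm{i},\bm{i}')$ entry and contracting $\langle\rho(\tau),X\rangle$ against $X_{\bm{j},\bm{j}'}$ produces the factor $\delta_{\tau}(\bm{j},\bm{j}')$, so that $\Phi_{(\bm{i},\bm{i}'),(\bm{j},\bm{j}')}=\sum_{\sigma,\tau}\delta_{\sigma}(\bm{i},\bm{i}')\,\delta_{\tau}(\bm{j},\bm{j}')\,(G^{+})_{\sigma\tau}$. It then remains to identify $(G^{+})_{\sigma\tau}$ with $\Wg^{\mathrm{U}}(\sigma^{-1}\tau;n)$: the matrix $(\sigma,\tau)\mapsto n^{\kappa(\sigma\tau^{-1})}$ is the matrix of convolution by the central function $n^{\kappa(\cdot)}$, whose pseudo-inverse in $\mathcal{Z}(L(S_{k}))$ is by definition $\Wg^{\mathrm{U}}(\cdot;n)$; since $\Wg^{\mathrm{U}}(\cdot;n)$ is a class function with $\Wg^{\mathrm{U}}(\pi^{-1};n)=\Wg^{\mathrm{U}}(\pi;n)$, this gives $(G^{+})_{\sigma\tau}=\Wg^{\mathrm{U}}(\sigma^{-1}\tau;n)$, completing the proof.

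The main obstacle is twofold. The representation-theoretic input, that the commutant $\mathcal{A}$ is exactly $\mathrm{span}\{\rho(\sigma)\}$ (Schur--Weyl duality), is the conceptual heart of the argument. The genuinely delicate point, however, is that when $n<k$ the operators $\rho(\sigma)$ are linearly dependent, so $G$ is singular; this is precisely why the Moore--Penrose pseudo-inverse --- and hence the pseudo-inverse defining $\Wg^{\mathrm{U}}$ rather than an honest inverse --- is unavoidable, and care is needed to match the pseudo-inverse of the Gram matrix with the central pseudo-inverse in $L(S_{k})$ exactly.
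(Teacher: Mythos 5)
Your proof is correct, and it coincides with the argument behind the paper's source for this statement: the paper quotes the proposition from \cite{Co03} without giving a proof, and the proof there (and in \cite{CS06}) is exactly your route --- identify $E[U^{\otimes k} X (U^{\otimes k})^{*}]$ as the orthogonal projection onto the commutant of $\{U^{\otimes k}\}$, invoke Schur--Weyl duality to span the commutant by the operators $\rho(\sigma)$ with $(\rho(\sigma))_{\bm{i},\bm{i}'}=\delta_{\sigma}(\bm{i},\bm{i}')$, and identify the Moore--Penrose pseudo-inverse of the Gram matrix $G_{\sigma\tau}=n^{\kappa(\sigma\tau^{-1})}$ with $\Wg^{\mathrm{U}}(\sigma^{-1}\tau;n)$, the matching of the matrix pseudo-inverse with the central pseudo-inverse in $L(S_k)$ being legitimate because $\sigma \mapsto n^{\kappa(\sigma)}$ and $\Wg^{\mathrm{U}}(\cdot;n)$ are real-valued class functions, so the associated convolution matrices are real symmetric and the two defining convolution identities together with this symmetry are precisely the Moore--Penrose conditions. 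One cosmetic correction: with your convention $\langle A,B\rangle=\Tr(AB^{*})$, which is conjugate-linear in the second argument, the projection formula should read $\mathcal{E}(X)=\sum_{\sigma,\tau}(G^{+})_{\sigma\tau}\,\langle X,\rho(\tau)\rangle\,\rho(\sigma)$ rather than with $\langle\rho(\tau),X\rangle$ (which is conjugate-linear in $X$, whereas $\mathcal{E}$ is linear); since the $\rho(\tau)$ have real entries the contraction still produces $\delta_{\tau}(\bm{j},\bm{j}')$ and the final identity is unaffected.
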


We will need the following function later.
Define the function $\Wg^{\mathrm{U}}(\cdot;z,w)$ on $S_k$
with two complex parameters $z,w \in \mathbb{C}$ by the convolution
\begin{equation}  \label{eq:double-unitaryWg}
\Wg^{\mathrm{U}}(\cdot ;z,w) = \Wg^{\mathrm{U}}(\cdot;z) * \Wg^{\mathrm{U}}(\cdot;w).
\end{equation}
More precisely,
$$
\Wg^{\mathrm{U}}(\cdot ;z,w) =
\frac{1}{k!}
\sum_{\begin{subarray}{c} \lambda \vdash k \\ C_{\lambda}(z) C_{\lambda}(w) \not= 0 \end{subarray}}
\frac{f^\lambda}{C_\lambda(z)C_\lambda(w)} \chi^\lambda.
$$

\subsubsection{Orthogonal Weingarten calculus}

We next review theory  on orthogonal integration and orthogonal Weingarten function.
See \cite{CS06,CM09, M_ortho, M_Wishart, M_COE} for detail.
We use notation defined in \S \ref{sec:NotationR}.

Let $H_k$ be the hyperoctahedral group of order $2^k k!$,
which is the centralizer of $t_k$ in $S_{2k}$, where
$t_k \in S_{2k}$ is the product of the transpositions $(1 \ 2), (3 \ 4),\dots,
(2k-1 \ 2k)$.
Let $L(S_{2k},H_k)$ be the subspace of all $H_k$-biinvariant functions in $L(S_{2k})$:
$$
L(S_{2k},H_k)= \{f \in L(S_{2k}) \ | \ f(\zeta \sigma)=f(\sigma \zeta)=f(\sigma)
\quad (\sigma \in S_{2k}, \ \zeta \in H_k)\}.
$$
It is a \emph{commutative} $\mathbb{C}$-algebra under the convolution.

We introduce another product on $L(S_{2k},H_k)$.
For $f_1,f_2 \in L(S_{2k},H_k)$, we define
$$
(f_1 \sharp f_2)(\sigma)= \sum_{\tau \in M_{2k}} f_1(\sigma \tau) f_2(\tau^{-1})
\qquad (\sigma \in S_{2k}).
$$
Note that $f_1 \sharp f_2 = (2^k k!)^{-1} f_1 * f_2$.
In fact, since $M_{2k}$ gives the representative of cosets $\sigma H_{k}$ in $S_{2k}$
and since $f_1,f_2$ are $H_k$-biinvariant, we have
$$
(f_1*f_2)(\sigma)= \sum_{\tau \in M_{2k}} \sum_{\zeta \in H_k}
f_1(\sigma (\tau \zeta)) f_2( (\tau \zeta)^{-1})=
\sum_{\tau \in M_{2k}} \sum_{\zeta \in H_k}
f_1(\sigma \tau) f_2( \tau^{-1})= |H_k| (f_1 \sharp f_2)(\sigma).
$$
The new product $\sharp$ is most of the same as the convolution $*$ on $L(S_{2k})$
up to the normalization factor $2^k k!$,
 but it will be convenient in the present context.
We note that $L(S_{2k},H_k)$ is a commutative algebra under the product $\sharp$ with the
identity element
$$
\mathbf{1}_{H_k}(\sigma)= \begin{cases}1 & \text{if $\sigma \in H_k$} \\
0 & \text{otherwise}.
\end{cases}
$$

Consider the  function $z^{\kappa'(\cdot)}$ with a complex parameter $z$ defined by
$$
S_{2k} \ni \sigma \mapsto z^{\kappa'(\sigma)} \in \mathbb{C},
$$
which belongs to $L(S_{2k},H_k)$.
The \emph{orthogonal Weingarten function}
$\Wg^{\mathrm{O}} (\sigma;z) \ (\sigma \in S_{2k})$
is the unique element in $L(S_{2k},H_k)$ satisfying
$$
z^{\kappa'(\cdot)} \sharp \Wg^{\mathrm{O}}(\cdot;z) \sharp z^{\kappa'(\cdot)} = z^{\kappa'(\cdot)}
\qquad \text{and} \qquad
\Wg^{\mathrm{O}}(\cdot ;z) \sharp z^{\kappa'(\cdot)} \sharp \Wg^{\mathrm{O}}(\cdot ;z)=
\Wg^{\mathrm{O}}(\cdot ;z).
$$

For each partition $\lambda$ of $k$,
the zonal spherical function $\omega^\lambda$ is defined by
$\omega^\lambda = (2^k k!)^{-1} \chi^{2\lambda} * \mathbf{1}_{H_k}$,
where $2\lambda=(2\lambda_1,2\lambda_2,\dots)$,
and the family of $\omega^\lambda$
form a linear basis of $L(S_{2k},H_k)$.
The expansion of $\Wg^{\mathrm{O}}(\cdot;z)$ in terms of $\omega^\lambda$ is given by
$$
\Wg^{\mathrm{O}} (\sigma;z) = \frac{2^k k!}{(2k)!}
\sum_{\begin{subarray}{c} \lambda \vdash k \\ C'_{\lambda}(z) \not= 0 \end{subarray}}
\frac{f^{2\lambda}}{C'_\lambda(z)} \omega^\lambda(\sigma) \qquad (\sigma \in S_{2k}),
$$
summed over all parititons $\lambda$ of $k$ satisfying $C'_\lambda(z) \not =0$,
where
$$
C'_\lambda(z)= \prod_{i=1}^{\ell(\lambda)} \prod_{j=1}^{\lambda_i} (z+2j-i-1).
$$
In particular, if $C'_\lambda(z) \not=0$ for all partitions $\lambda$ of $k$,
functions $z^{\kappa'(\cdot)}$ and $\Wg^{\mathrm{O}}(\cdot ;z)$ are
their inverse of each other and
satisfy $z^{\kappa'(\cdot)} \sharp \Wg^{\mathrm{O}} (\cdot;z) = \mathbf{1}_{H_k}$.

Let $\mathrm{O}(n)$ be the real orthogonal group of degree $n$,
equipped with its Haar probability measure.

\begin{prop}[\cite{CM09}] \label{prop:OrthogonalWeingartenCalculus}
Let $U=(u_{ij})_{1 \le i,j \le n}$ be an $n \times n$ Haar-distributed orthogonal matrix.
For two sequences $\bm{i}=(i_1,\dots,i_{2k})$ and $\bm{j}=(j_1,\dots,j_{2k})$, we have
\begin{equation}
E[u_{i_1 j_1} u_{i_2 j_2} \cdots u_{i_{2k} j_{2k}}] = \sum_{\sigma,\tau \in M_{2k}}
\delta'_\sigma(\bm{i}) \delta'_{\tau}(\bm{j}) \Wg^{\mathrm{O}}(\sigma^{-1} \tau;n).
\end{equation}
Here $\sigma, \tau \in M_{2k}$ are regarded as permutations in $S_{2k}$,
and so is $\sigma^{-1} \tau$.
\end{prop}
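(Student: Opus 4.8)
The plan is to realize the left-hand side as a matrix coefficient of the orthogonal projection onto the space of $\mathrm{O}(n)$-invariant tensors, and then to identify that projection via the first fundamental theorem of invariant theory. Concretely, I would consider the operator $P = \int_{\mathrm{O}(n)} U^{\otimes 2k}\, dU$ acting on $(\mbb{R}^n)^{\otimes 2k}$. Since integrating a representation against the Haar measure produces the projection onto its trivial isotypic component, $P$ is exactly the orthogonal projection onto the subspace $\mcal{I}$ of $\mathrm{O}(n)$-invariant vectors, and the desired expectation is the matrix coefficient $\langle e_{\bm{i}}, P\, e_{\bm{j}} \rangle$ in the standard tensor basis, where $e_{\bm{i}} = e_{i_1}\otimes\cdots\otimes e_{i_{2k}}$.

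First I would invoke Brauer's first fundamental theorem for $\mathrm{O}(n)$: the invariant space $\mcal{I}$ is spanned by the pairing tensors $\bm{e}_\sigma = \sum_{\bm{i}\in[n]^{2k}} \delta'_\sigma(\bm{i})\, e_{i_1}\otimes\cdots\otimes e_{i_{2k}}$ indexed by $\sigma \in M_{2k}$. (In odd tensor degree there are no invariants, which is why only even products appear.) Since $P$ projects onto $\mathrm{span}\{\bm{e}_\sigma\}$ and fixes each $\bm{e}_\nu$, writing $P = \sum_{\sigma,\tau} c_{\sigma\tau}\, \bm{e}_\sigma \langle \bm{e}_\tau, \cdot \rangle$ and imposing $P\bm{e}_\nu = \bm{e}_\nu$ reduces the problem to inverting the Gram matrix $G_{\sigma\tau} = \langle \bm{e}_\sigma, \bm{e}_\tau \rangle$.

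The next step is to compute this Gram matrix. Superimposing the two matchings $\sigma$ and $\tau$ forces the indices $\bm{i}$ to be constant on each connected component of the resulting graph, so each component contributes one free summation index and hence a factor $n$; this yields $G_{\sigma\tau} = n^{\kappa'(\sigma^{-1}\tau)}$, i.e.\ the restriction of the central function $z^{\kappa'(\cdot)}$ to $z = n$. By the defining property of $\Wg^{\mathrm{O}}(\cdot;n)$ as the pseudo-inverse of $z^{\kappa'(\cdot)}$ under the product $\sharp$ on $L(S_{2k},H_k)$, the coefficient matrix is $c_{\sigma\tau} = \Wg^{\mathrm{O}}(\sigma^{-1}\tau;n)$; reading off the $(\bm{i},\bm{j})$ entry of $P$ then gives the claimed formula, exactly paralleling Proposition \ref{prop:UnitaryWeingartenCalculus}.

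The hard part will be the degenerate regime: when $n$ is small relative to $k$ the pairing tensors $\{\bm{e}_\sigma\}$ become linearly dependent, so $G$ is singular and a genuine inverse does not exist. Here I would argue that $P$, being the projection onto the possibly lower-dimensional span, is represented by the Moore--Penrose pseudo-inverse of $G$, and that this pseudo-inverse on functions over $M_{2k}$ coincides with the $\sharp$-pseudo-inverse of $z^{\kappa'(\cdot)}$ in $L(S_{2k},H_k)$ that defines $\Wg^{\mathrm{O}}$. Making this identification precise --- matching the linear algebra on $\mathrm{span}\{\bm{e}_\sigma\}$ with the convolution algebra structure, and verifying that the two pseudo-inverse conditions agree on the relevant subspace --- is the only genuinely delicate point; everything else is bookkeeping.
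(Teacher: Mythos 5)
The paper offers no proof of this proposition at all---it is quoted from \cite{CM09}---and your argument is essentially the standard proof from that reference and \cite{CS06}: realize the moment as a matrix coefficient of the projection $P=\int_{\mathrm{O}(n)}U^{\otimes 2k}\,dU$ onto the span of Brauer's pairing tensors $\bm{e}_\sigma$, compute the Gram matrix $G_{\sigma\tau}=n^{\kappa'(\sigma^{-1}\tau)}$, and invert it in the pseudo-inverse sense, which is exactly why $\Wg^{\mathrm{O}}$ is defined via the pseudo-inverse convention. The delicate point you flag (small $n$, dependent $\bm{e}_\sigma$) resolves as you expect: the map $f\mapsto\bigl(f(\sigma^{-1}\tau)\bigr)_{\sigma,\tau\in M_{2k}}$ is an algebra homomorphism from $(L(S_{2k},H_k),\sharp)$ into matrices carrying $z^{\kappa'(\cdot)}\big|_{z=n}$ to $G$, so the $\sharp$-pseudo-inverse maps to a reflexive generalized inverse $W$ of $G$ (i.e.\ $GWG=G$, $WGW=W$), and for any such $W$ one checks directly that $\sum_{\sigma,\tau}W_{\sigma\tau}\,\bm{e}_\sigma\langle\bm{e}_\tau,\cdot\rangle$ is the orthogonal projection onto $\mathrm{span}\{\bm{e}_\sigma\}$---no identification with the Moore--Penrose inverse is actually needed.
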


We will need the following function later.
Define the function $\Wg^{\mathrm{O}}(\cdot;z,w)$ in $L(S_{2k},H_k)$
with two complex parameters $z,w \in \mathbb{C}$ by
\begin{equation}  \label{eq:double-orthogonalWg}
\Wg^{\mathrm{O}}(\cdot ;z,w) = \Wg^{\mathrm{O}}(\cdot;z) \sharp \Wg^{\mathrm{O}}(\cdot;w).
\end{equation}
More precisely,
$$
\Wg^{\mathrm{O}}(\cdot ;z,w) =
\frac{2^k k!}{(2k)!}
\sum_{\begin{subarray}{c} \lambda \vdash k \\ C'_{\lambda}(z) C'_{\lambda}(w) \not= 0 \end{subarray}}
\frac{f^{2\lambda}}{C'_\lambda(z)C'_\lambda(w)} \omega^\lambda.
$$

\subsection{Wishart matrices and their inverse}

\subsubsection{Complex Wishart matrices}

Let $X$ be an $n \times p$ random matrix
whose columns are i.i.d. complex vectors which follow
$n$-dimensional complex normal distributions
$\mathrm{N}_\mathbb{C}(\bm{0},\Sigma)$, where $\Sigma$ is an $n \times n$ positive definite
Hermitian matrix.
Then we call a random matrix $W= X X^*$ a (centered) \emph{complex Wishart matrix}.

We will need the computation of the local moments for the inverse $W^{-1}$.

\begin{prop}[\cite{GLM}] \label{prop-complex-Wishart-inverse}
Let $W$ be a complex Wishart matrix defined as above.
Put $q=p-n$.
If $\pi \in S_k$ and  $q \ge k$, then
\begin{equation}
E[\Tr_\pi (W^{-1})] = (-1)^k \sum_{\tau \in S_k} \Wg^{\mathrm{U}}(\pi\tau^{-1};-q)
\Tr_\tau(\Sigma^{-1}).
\end{equation}
\end{prop}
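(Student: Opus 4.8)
The plan is to \emph{diagonalize} the problem in the Schur-function basis, in which both the complex Wishart law and its inverse act by scalars, and then translate the answer back to the permutation (Weingarten) picture by Frobenius duality. The starting point is the elementary identity, valid for any $n \times n$ matrix $A$,
\[
\Tr_\pi(A) = p_\mu(A) = \sum_{\lambda \vdash k} \chi^\lambda(\pi)\, s_\lambda(A),
\]
where $\mu$ is the cycle-type of $\pi$, and $p_\mu, s_\lambda$ are the power-sum and Schur functions of the eigenvalues of $A$ (Frobenius' formula). Applying this with $A = W^{-1}$ reduces the statement to computing the single expectation $E[s_\lambda(W^{-1})]$ for each $\lambda \vdash k$.

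The core step will be the Schur-function eigenrelation
\[
E[s_\lambda(W^{-1})] = \frac{1}{C_{\lambda'}(q)}\, s_\lambda(\Sigma^{-1}), \qquad q = p-n,
\]
where $\lambda'$ denotes the conjugate partition; it is the exact counterpart of the classical relation $E[s_\lambda(W)] = C_\lambda(p)\,s_\lambda(\Sigma)$ for the direct Wishart. To prove it I would start from the complex Wishart density, proportional to $(\det W)^{p-n}(\det \Sigma)^{-p} e^{-\Tr(\Sigma^{-1}W)}$, pass to the law of $V := W^{-1}$, whose density is proportional to $(\det V)^{-(p+n)} e^{-\Tr(\Sigma^{-1}V^{-1})}$ via the Jacobian $(\det V)^{-2n}$ of Hermitian inversion, and evaluate $\int_{V>0} s_\lambda(V)(\det V)^{-(p+n)} e^{-\Tr(\Sigma^{-1}V^{-1})}\,dV$. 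The substitution $V \mapsto V^{-1}$ turns the exponent back into $\Tr(\Sigma^{-1}V)$, and the complementation identity writing $s_\lambda$ at reciprocal eigenvalues as $(\det V)^{-\lambda_1}$ times a complementary Schur polynomial $s_\mu$ converts the integral into an ordinary Schur--Wishart integral, which is a complex multivariate gamma integral of known value. Reading off the result and re-expressing $s_\mu(\Sigma)(\det\Sigma)^{-\lambda_1} = s_\lambda(\Sigma^{-1})$ yields the displayed formula; the hypothesis $q \ge k$ is precisely what guarantees convergence of the integral (equivalently, that $C_{\lambda'}(q) \neq 0$ for all $\lambda \vdash k$).

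Finally I would reassemble. Using the reflection $C_\lambda(-q) = (-1)^k C_{\lambda'}(q)$, the eigenrelation becomes $E[s_\lambda(W^{-1})] = (-1)^k C_\lambda(-q)^{-1} s_\lambda(\Sigma^{-1})$, so
\[
E[\Tr_\pi(W^{-1})] = (-1)^k \sum_{\lambda \vdash k} \frac{\chi^\lambda(\pi)}{C_\lambda(-q)}\, s_\lambda(\Sigma^{-1}).
\]
On the target side I would expand $\Tr_\tau(\Sigma^{-1}) = \sum_\lambda \chi^\lambda(\tau) s_\lambda(\Sigma^{-1})$, and then invoke the character expansion of $\Wg^{\mathrm{U}}(\cdot;-q)$ from \S\ref{sec:Wg} together with the orthogonality of irreducible characters of $S_k$, which gives $\sum_{\tau \in S_k} \chi^\lambda(\tau)\, \Wg^{\mathrm{U}}(\pi\tau^{-1};-q) = \chi^\lambda(\pi)/C_\lambda(-q)$. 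Substituting shows that $(-1)^k \sum_{\tau} \Wg^{\mathrm{U}}(\pi\tau^{-1};-q)\Tr_\tau(\Sigma^{-1})$ equals the displayed sum, which is the claim.

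The main obstacle is the Schur eigenrelation, and specifically the emergence of the \emph{conjugate} partition $\lambda'$ with an $n$-free factor. Carrying out the $V \mapsto V^{-1}$ substitution forces one through the complementation identity for $s_\lambda$ at reciprocals, after which the raw answer is a ratio of complex multivariate gamma factors times $C_\mu(p-\lambda_1)$, with $\mu$ and the shift $\lambda_1$ depending on the ambient size $n$. The heart of the computation is the purely combinatorial cancellation showing that this ratio collapses to the clean, $n$-independent quantity $C_{\lambda'}(q)^{-1}$; everything else is either a standard matrix integral or routine bookkeeping in the center of $\mathbb{C}[S_k]$.
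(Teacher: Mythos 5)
Your proposal is correct, but note what it is being compared against: the paper contains \emph{no} proof of Proposition~\ref{prop-complex-Wishart-inverse} --- it is imported verbatim from \cite{GLM} as an input to Section~\ref{sec:application} --- so your argument is a reconstruction of the classical Graczyk--Letac--Massam mechanism rather than an alternative to an internal proof. Each step of your plan is sound: the Frobenius expansion $\Tr_\pi(W^{-1})=\sum_{\lambda\vdash k}\chi^\lambda(\pi)s_\lambda(W^{-1})$; the reflection $C_\lambda(-q)=(-1)^k C_{\lambda'}(q)$, which is the cell-wise identity $\prod_{(i,j)\in\lambda}(-q+j-i)=(-1)^k\prod_{(i,j)\in\lambda}(q+i-j)$; and the convolution step $\sum_{\tau\in S_k}\chi^\lambda(\tau)\Wg^{\mathrm{U}}(\pi\tau^{-1};-q)=\chi^\lambda(\pi)/C_\lambda(-q)$, which is legitimate precisely because $q\ge k$ forces every factor $q+i-j\ge q+1-\lambda_1\ge 1$, so $C_\lambda(-q)\ne 0$ for all $\lambda\vdash k$ and no term is dropped from the expansion of $\Wg^{\mathrm{U}}(\cdot;-q)$; partitions with $\ell(\lambda)>n$ kill $s_\lambda$ in $n$ variables on both sides, so restricting to $\ell(\lambda)\le n$ (needed for your complementation step) is harmless. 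The one step you defer --- the collapse of the gamma ratio to the $n$-free constant $C_{\lambda'}(q)^{-1}$ --- does go through, and is worth recording since it is the heart of the matter: with $\mu_i=\lambda_1-\lambda_{n+1-i}$, your integral evaluates to
\begin{equation*}
E[s_\lambda(W^{-1})]=\frac{\widetilde{\Gamma}_n(p-\lambda_1)}{\widetilde{\Gamma}_n(p)}\,C_\mu(p-\lambda_1)\,s_\lambda(\Sigma^{-1}),
\qquad
\frac{\widetilde{\Gamma}_n(p)}{\widetilde{\Gamma}_n(p-\lambda_1)}=\prod_{i=1}^n\,(p-i)(p-i-1)\cdots(p-i-\lambda_1+1),
\end{equation*}
and in row $i$ the factors $p-\lambda_1+j-i$ $(1\le j\le\mu_i)$ of $C_\mu(p-\lambda_1)$ cancel exactly the bottom $\mu_i$ terms of that falling string, leaving $\prod_{t=0}^{\lambda_{n+1-i}-1}(p-i-t)$; re-indexing $r=n+1-i$, $j=t+1$ turns the surviving product into $\prod_{(r,j)\in\lambda}(q+r-j)=C_{\lambda'}(q)$, as you predicted. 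Convergence of the Schur--Wishart integral requires $p-\lambda_1>n-1$, i.e.\ $q\ge\lambda_1$, so $q\ge k$ suffices (your ``equivalently'' is loose --- convergence and nonvanishing of $C_{\lambda'}(q)$ are distinct conditions that merely coincide for positive integers $q\ge k$ --- but harmless). What your proof buys relative to the paper is self-containedness: the authors consume this proposition in the proof of Theorem~\ref{thm:complex-Ginibre-inverse} (and thence Theorem~\ref{thm:complex-compound-inverse}), so their own Weingarten machinery (Theorem~\ref{thm:unitary-LRinvariance}) cannot re-derive it without circularity, whereas your Schur-basis argument supplies the missing external ingredient from scratch.
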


\subsubsection{Real Wishart matrices}

Let $X$ be an $n \times p$ random matrix
whose columns are i.i.d. vectors which follow
$n$-dimensional real normal distributions
$\mathrm{N}_\mathbb{R}(\bm{0},\Sigma)$, where $\Sigma$ is an $n \times n$ positive definite
real symmetric matrix.
Then we call a random matrix $W= X \trans{X}$ a (centered) \emph{real Wishart matrix}.

\begin{prop}[\cite{M_Wishart}]  \label{prop-real-Wishart-inverse}
Let $W$ be a real Wishart matrix defined as above.
Put $q=p-n-1$.

If $\pi \in M_{2k}$ and  $q \ge 2k-1$, then
\begin{equation}
E[\Tr_\pi' (W^{-1})] = (-1)^k \sum_{\tau \in M_{2k}} \Wg^{\mathrm{O}}(\pi\tau^{-1};-q)
\Tr_\tau'(\Sigma^{-1}).
\end{equation}
\end{prop}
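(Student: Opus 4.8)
The plan is to adapt to the orthogonal setting the argument underlying the complex formula of Proposition~\ref{prop-complex-Wishart-inverse}, replacing Schur functions and symmetric-group characters by zonal polynomials and zonal spherical functions. First I would write the density of $W$ on the cone of positive definite real symmetric matrices as proportional to $(\det W)^{q/2}\exp(-\tfrac12\Tr(\Sigma^{-1}W))$ with $q=p-n-1$, so that $E[\Tr_\pi'(W^{-1})]$ becomes an explicit integral over the cone. Since $\Tr_\pi'(W^{-1})$ equals the power-sum symmetric function $p_\mu$ in the eigenvalues of $W^{-1}$, where $\mu$ is the coset-type of $\pi$, the natural first move is to expand $p_\mu$ in the basis of zonal polynomials $Z_\lambda$ ($\lambda\vdash k$); the change-of-basis coefficients are governed by the values of the zonal spherical functions $\omega^\lambda$ (see \cite[VII.2]{Macdonald}).

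The analytic core is the single identity
\begin{equation*}
E[Z_\lambda(W^{-1})] = (-1)^{k}\,\frac{Z_\lambda(\Sigma^{-1})}{C'_\lambda(-q)} \qquad (\lambda\vdash k),
\end{equation*}
valid for $q\ge 2k-1$ once $Z_\lambda$ is normalized so that $Z_{(1)}=\Tr$. I would establish it by the zonal analog of the gamma integral over the cone, using the elementary identity $C'_\lambda(z)=2^{k}(z/2)_\lambda$ relating the product $C'_\lambda$ of \S\ref{sec:Wg} to the generalized Pochhammer symbol $(z/2)_\lambda$: evaluating the resulting gamma factors at the negative argument $-q/2$ produces both the denominator $C'_\lambda(-q)$ and the sign $(-1)^k$, while the inequality $q\ge 2k-1$ guarantees integrability and the non-vanishing of every $C'_\lambda(-q)$ (indeed each of its $k$ factors is then strictly negative). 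The case $k=1$ recovers the classical $E[\Tr(W^{-1})]=\Tr(\Sigma^{-1})/q$ and pins down the normalization.

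With this identity I would reassemble the answer: substitute it into the zonal expansion of $p_\mu(W^{-1})$, and then re-expand each $Z_\lambda(\Sigma^{-1})$ back into power sums $\Tr_\tau'(\Sigma^{-1})$, $\tau\in M_{2k}$. Collecting the double sum over $\lambda$ and grouping the pair partitions by coset-type, the coefficient of $\Tr_\tau'(\Sigma^{-1})$ becomes
\begin{equation*}
(-1)^k\,\frac{2^k k!}{(2k)!}\sum_{\lambda\vdash k}\frac{f^{2\lambda}}{C'_\lambda(-q)}\,\omega^\lambda(\pi\tau^{-1}),
\end{equation*}
in which one recognizes exactly $(-1)^k\Wg^{\mathrm{O}}(\pi\tau^{-1};-q)$ through the spectral expansion of the orthogonal Weingarten function recorded in \S\ref{sec:Wg}. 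This is the desired formula.

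I expect the main obstacle to be the representation-theoretic bookkeeping in the reassembly. One must invert the zonal change of basis using the orthogonality relations for the $\omega^\lambda$, and crucially use that $\omega^\lambda$ is a spherical function so that products $\omega^\lambda(\pi)\,\omega^\lambda(\tau)$ collapse, after summation over a coset, into the single value $\omega^\lambda(\pi\tau^{-1})$ --- this is what turns two separate coset-type dependences into dependence on $\pi\tau^{-1}$ alone, mirroring the role of $\sum_\sigma\chi^\lambda(\pi\sigma^{-1})$ in the complex case. All the normalizing constants (the factor $2^k k!/(2k)!$, the dimensions $f^{2\lambda}$, and the powers $2^{\pm k}$ relating $C'_\lambda$ to $(z/2)_\lambda$) must be tracked so that the two sides agree exactly rather than up to a scalar; a secondary but genuine point is the rigorous justification of the integral identity at the negative parameter $-q$, for which $q\ge 2k-1$ is precisely the hypothesis that makes every term finite.
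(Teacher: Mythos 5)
Your plan is correct, and it is essentially the argument of the cited source: the paper itself states Proposition~\ref{prop-real-Wishart-inverse} without proof, deferring to \cite{M_Wishart}, whose proof likewise runs through the zonal-polynomial expansion of $\Tr'_\pi$, the inverse-Wishart zonal moment $E[Z_\lambda(W^{-1})]=(-1)^k Z_\lambda(\Sigma^{-1})/C'_\lambda(-q)$ obtained from the zonal gamma integral via $C'_\lambda(z)=2^k(z/2)_\lambda$, and the $\omega^\lambda$-expansion of $\Wg^{\mathrm{O}}(\cdot\,;-q)$ to reassemble the coefficient of $\Tr'_\tau(\Sigma^{-1})$. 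Your normalization checks (the $k=1$ case, the strict negativity of all $k$ factors of $C'_\lambda(-q)$ under $q\ge 2k-1$, and the $2^k k!/(2k)!$ prefactor matching the spectral expansion of the orthogonal Weingarten function) are exactly the points that make the two sides agree on the nose.
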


\section{Invariant random matrices}\label{sec:Invariance}

In this section we consider random matrices with invariance property and establish the link between local and global moments.

\subsection{Conjugacy invariance}

\subsubsection{Unitary case}

\begin{thm}\label{thm:unitary-local-global}
Let $W=(w_{ij})$ be an $n \times n$ complex Hermitian random matrix with the invariant property
such that $UWU^*$ has the same distribution as
$W$ for any unitary matrix $U$.
For two sequences $\bm{i}=(i_1,\dots,i_{k})$
and $\bm{j}=(j_1,\dots,j_k)$, we have
\begin{equation*}
E[w_{i_1 j_1} w_{i_2 j_2} \dots w_{i_{k} j_{k}}]=
\sum_{\sigma,\tau\in S_{k}} \delta_{\sigma}(\bm{i},\bm{j})  \Wg^{\mathrm{U}}(\sigma^{-1} \tau;n) E[\Tr_{\tau}(W)].
\end{equation*}
\end{thm}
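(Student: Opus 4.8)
The plan is to exploit the conjugation invariance by integrating against an auxiliary Haar-distributed unitary. Let $U$ be an $n\times n$ Haar unitary independent of $W$. Because $W$ and $UWU^*$ have the same law for each fixed $U$, taking the expectation over $U$ as well and using Fubini gives
\[
E[w_{i_1 j_1}\cdots w_{i_k j_k}]
= E\!\left[\prod_{s=1}^k (UWU^*)_{i_s j_s}\right],
\]
the right-hand expectation now being over both $W$ and $U$. First I would expand each entry as $(UWU^*)_{i_s j_s}=\sum_{a_s,b_s} u_{i_s a_s}\, w_{a_s b_s}\, \overline{u_{j_s b_s}}$ and collect the resulting monomial in the entries of $U$.

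Next, conditioning on $W$ and integrating over $U$ by Proposition \ref{prop:UnitaryWeingartenCalculus}, the $U$-average of $\prod_{s} u_{i_s a_s}\overline{u_{j_s b_s}}$ equals $\sum_{\sigma,\tau\in S_k}\delta_\sigma(\bm{i},\bm{j})\,\delta_\tau(\bm{a},\bm{b})\,\Wg^{\mathrm{U}}(\sigma^{-1}\tau;n)$, where $\bm{a}=(a_1,\dots,a_k)$ and $\bm{b}=(b_1,\dots,b_k)$ are the summation indices. Here the pairing $\delta_\sigma$ attaches to the row indices $(\bm{i},\bm{j})$ and $\delta_\tau$ to the column indices $(\bm{a},\bm{b})$; the factors $\delta_\sigma(\bm{i},\bm{j})$ and $\Wg^{\mathrm{U}}(\sigma^{-1}\tau;n)$ depend on neither $\bm{a},\bm{b}$ nor $W$, so they factor out of the remaining sums.

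It then remains, for each fixed $\tau$, to evaluate the contraction $\sum_{\bm{a},\bm{b}}\delta_\tau(\bm{a},\bm{b})\,E[\prod_s w_{a_s b_s}]$. Since $\delta_\tau(\bm{a},\bm{b})=\prod_s \delta_{a_{\tau(s)},b_s}$ forces $b_s=a_{\tau(s)}$, this collapses to $\sum_{\bm{a}} E[\prod_s w_{a_s a_{\tau(s)}}]$. The one combinatorial step requiring care --- and the only place I expect any real obstacle --- is to recognise this as $E[\Tr_\tau(W)]$: for each cycle $(c_1\ \cdots\ c_m)$ of $\tau$ the corresponding factors are $w_{a_{c_1} a_{c_2}} w_{a_{c_2} a_{c_3}}\cdots w_{a_{c_m} a_{c_1}}$, and summing over those indices yields $\Tr(W^m)$, so the product over all cycles gives exactly $\Tr_\tau(W)$ by its definition. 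Substituting this back and summing over $\sigma$ and $\tau$ produces the claimed identity. Aside from this contraction the argument is bookkeeping; the main subtlety is to keep straight which pairing acts on the row indices and which on the column indices when invoking the Weingarten formula.
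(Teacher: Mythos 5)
Your proof is correct, but it takes a genuinely different route from the paper's. The paper first establishes a structural lemma (Lemma \ref{lem-unitary-case}): $W$ has the same law as $UDU^*$ with $U$ Haar-distributed, $D$ diagonal carrying the eigenvalues, and $U,D$ independent --- a statement whose proof requires choosing $U,D$ as measurable functions of $W$ and invoking uniqueness of the Haar measure --- and then applies Proposition \ref{prop:UnitaryWeingartenCalculus} to the monomial $\prod_s u_{i_s r_s} d_{r_s}\overline{u_{j_s r_s}}$, reducing everything to the diagonal contraction \eqref{eq:unitary-combinatorial}. You instead randomize: since $W$ and $uWu^*$ have the same law for every fixed unitary $u$, the same holds with $u$ replaced by an auxiliary Haar unitary $U$ independent of $W$, and you apply the Weingarten formula conditionally on $W$, finishing with the general-matrix contraction $\sum_{\bm{a}}\prod_s w_{a_s a_{\tau(s)}}=\Tr_\tau(W)$, which your cycle-by-cycle verification handles correctly (and which only depends on the cycle type, so the $\tau$ versus $\tau^{-1}$ bookkeeping is immaterial). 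What your approach buys: it bypasses Lemma \ref{lem-unitary-case} entirely --- no eigendecomposition, no measurable selection, no uniqueness-of-Haar argument --- and it never actually uses that $W$ is Hermitian, so your argument proves the identity for any square conjugation-invariant random matrix, whereas the paper's route needs $W$ normal to write $W=UDU^*$. What the paper's route buys: the lemma is of independent interest and is reused (the proofs of Theorems \ref{thm:orthogonal-local-global}, \ref{thm:unitary-LRinvariance}, and \ref{thm:orthogonal-LRinvariance} all begin ``as in Lemma \ref{lem-unitary-case}''), though your randomization trick would adapt just as readily to those settings. One point you share with the paper rather than fix: the Fubini interchange needs the $k$-th joint moments of the entries of $W$ to be finite; since the entries of $U$ are bounded, this is exactly the integrability already implicit in the statement, so it is a shared implicit hypothesis rather than a gap in your argument.
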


Before we prove this theorem we need the following lemma

\begin{lem}\label{lem-unitary-case}
Let $W$ be as in Theorem \ref{thm:unitary-local-global}.
$W$ has the same distribution as $UDU^*$,
 where $U$ is a Haar distributed random unitary matrix,
$D$ is a diagonal matrix whose
eigenvalues have the same distribution as those of $W$, and $D,U$ are independent.
\end{lem}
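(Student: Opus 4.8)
The plan is to use the spectral theorem together with the unitary invariance hypothesis. Since $W$ is Hermitian, we can write $W = V \Lambda V^*$ where $V$ is unitary and $\Lambda = \diag(\lambda_1,\dots,\lambda_n)$ collects the (real) eigenvalues; the eigenvalue vector and the unitary $V$ are random and in general not independent. The idea is to replace $V$ by a genuinely Haar-distributed unitary that is independent of the eigenvalues, and to argue that this replacement does not change the distribution of the matrix. Concretely, I would let $U$ be Haar-distributed on the unitary group and independent of everything, and I would compare $W$ with $U D U^*$, where $D$ is a diagonal matrix carrying the same eigenvalue distribution as $W$.

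The key step is a conditioning argument exploiting the invariance $UWU^* \stackrel{d}{=} W$. First I would fix (condition on) the eigenvalues of $W$, i.e. condition on $\Lambda$. Given $\Lambda$, the matrix $W$ is supported on the set of Hermitian matrices with that spectrum, which is a single orbit $\{V \Lambda V^* : V \in \mathrm{U}(n)\}$ under conjugation. The invariance hypothesis says that the conditional law of $W$ on this orbit is itself invariant under conjugation by any fixed unitary. Since the conjugation action of $\mathrm{U}(n)$ on the orbit is transitive, the unique invariant probability measure on the orbit is the pushforward of the Haar measure under $V \mapsto V \Lambda V^*$. Hence, conditionally on the eigenvalues, $W$ has the same law as $U \Lambda U^*$ with $U$ Haar-distributed. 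Averaging over the eigenvalue distribution (which by construction matches that of $D$) then gives $W \stackrel{d}{=} U D U^*$ with $U$ Haar and independent of $D$, which is exactly the claim.

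I would make the orbit argument precise as follows. For a fixed spectrum, the stabilizer of $\Lambda$ under conjugation is the subgroup of unitaries commuting with $\Lambda$, so the orbit is the homogeneous space $\mathrm{U}(n)/\mathrm{Stab}(\Lambda)$, and there is a unique $\mathrm{U}(n)$-invariant probability measure on it. Any conjugation-invariant probability measure supported on a single orbit must coincide with this invariant measure by uniqueness; this is the heart of the matter and the step I expect to require the most care, since one must verify both that the conditional law is genuinely conjugation-invariant (inherited from the global invariance) and that it is supported on a single orbit (so that uniqueness applies). The potential subtlety is measure-theoretic: the eigenvalues may have ties and the orbit structure degenerates, and one must handle the regular conditional distribution rigorously. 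None of this affects the moment formula, so for the purposes of Theorem \ref{thm:unitary-local-global} it suffices to know the distributional identity $W \stackrel{d}{=} U D U^*$.

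Finally, I would note that this is the natural analogue, in the conjugation setting, of the standard fact that a left-right unitarily invariant matrix can be written as $U_1 D U_2^*$ with Haar-distributed $U_1, U_2$; the one-sided (conjugacy) version here simply forces a single Haar unitary on both sides. Once Lemma \ref{lem-unitary-case} is established, Theorem \ref{thm:unitary-local-global} follows by expanding $w_{ij} = \sum_{a,b} u_{ia} d_{ab} \overline{u_{jb}} = \sum_a u_{ia} \overline{u_{ja}} d_a$ (with $d_a$ the diagonal entries of $D$), substituting into the product of entries, and applying the unitary Weingarten formula of Proposition \ref{prop:UnitaryWeingartenCalculus} together with the independence of $U$ and $D$.
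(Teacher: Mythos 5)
Your proof is correct, but it takes a genuinely different route from the paper's. The paper never conditions on the spectrum: it first realizes $U$ and $D$ as measurable functions of $W$ (a measurable selection of the diagonalization, with an approximation argument when eigenvalues collide), deduces from $VWV^* \stackrel{d}{=} W$ for deterministic $V$ and uniqueness of Haar measure that $U$ must be Haar distributed, and then obtains the independence of $U$ from $D$ by re-randomizing — replacing the deterministic $V$ by an independent Haar-distributed one, so that $VU$ is Haar and independent of $D$ while $(VU)D(VU)^* = VWV^* \stackrel{d}{=} W$. You instead disintegrate the law of $W$ over the conjugation-invariant eigenvalue map and identify each conditional law as the unique $\mathrm{U}(n)$-invariant probability measure on the orbit $\mathrm{U}(n)/\mathrm{Stab}(\Lambda)$; the independence of $U$ and $D$ then comes for free from the product structure of the disintegration rather than from an extra randomization. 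Two remarks on the subtleties you flag. First, the null set on which conditional invariance could a priori fail depends on the conjugating unitary $V$; the clean fix is to average the conditional kernel over a Haar-distributed $V$ and invoke almost-sure uniqueness of regular conditional distributions, which yields exact invariance of the conditional law for almost every spectrum in one stroke. Second, eigenvalue ties are harmless for your argument: the set of Hermitian matrices with a prescribed spectrum is a single orbit whether or not eigenvalues repeat (only the stabilizer grows), so transitivity and uniqueness of the invariant measure on the compact homogeneous space apply verbatim — in this respect your route handles the degenerate case more gracefully than the paper's measurable-selection step, which is precisely where the paper resorts to its ``approximation argument.'' What each approach buys: the paper's is shorter and avoids regular conditional distributions altogether, while yours isolates the conceptual core (uniqueness of the invariant measure on a compact orbit) and generalizes immediately to any compact group acting with an invariant spectrum-type map; your concluding reduction of Theorem \ref{thm:unitary-local-global} to the Weingarten formula of Proposition \ref{prop:UnitaryWeingartenCalculus} matches the paper's proof of that theorem exactly.
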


\begin{proof}
Let $U,D$ be matrices ($U$ unitary, and $D$ diagonal) such that $W=UDU^*$. It is possible to have $U,D$
as measurable functions of $W$
(if the singular values have no multiplicity this follows from the fact that
$U$ can be essentially chosen in a canonical way, and in the general case, it follows by an approximation argument).
So, we may consider that $U,D$ are also random variables and that the sigma-algebra generated by $U,D$ is the same as
the sigma-algebra generated by $W$.

Let $V$ be a deterministic
unitary matrix.
The fact that $VWV^*$ has the same distribution as $W$ and our previous uniqueness
considerations imply that $VU$ has the same distribution as $U$. By uniqueness of the Haar measure, this implies
that $U$ has to be distributed according to the Haar measure.

To conclude the proof, we observe that instead of taking $V$ to be a deterministic unitary matrix, we could
have taken $V$ random, independent from $W$, and distributed according to the Haar measure without changing the
fact that $VWV^*$ has the same distribution as $W$.
This implies that $U$ can be replaced by $VU$, and clearly, $VU$ is Haar distributed, and independent from $D$,
so the proof is complete.
\end{proof}

\begin{proof}[Proof of Theorem \ref{thm:unitary-local-global}]
From Lemma \ref{lem-unitary-case},
each matrix entry $w_{ij}$ has the same distribution as $\sum_{r=1}^n u_{ir} d_{r} \overline{u_{jr}}$,
where $U=(u_{ij})$ and $D=\diag(d_1,\dots,d_n)$ are
unitary and diagonal matrices respectively and
$U,D$ are independent.
It follows that
\begin{align*}
&E[w_{i_1 j_1} w_{i_2 j_2} \cdots w_{i_{k} j_{k}}] \\
=& \sum_{\bm{r}=(r_1, \dots, r_k)} E[d_{r_1} d_{r_2} \cdots d_{r_k}]
\cdot E[u_{i_1 r_1} u_{i_2 r_2} \cdots u_{i_{k} r_k}
\overline{u_{j_1 r_1} u_{j_2 r_2} \cdots u_{j_{k} r_k}}].
\end{align*}
The unitary Weingarten calculus
(Proposition  \ref{prop:UnitaryWeingartenCalculus}) gives
\begin{align*}
=& \sum_{\bm{r}=(r_1, \dots, r_k)} E[d_{r_1} d_{r_2} \cdots d_{r_k}]
\sum_{\sigma, \tau \in S_k}
\delta_{\sigma}(\bm{i},\bm{j})
\delta_{\tau}(\bm{r},\bm{r})
\Wg^{\mathrm{U}}(\sigma^{-1} \tau;n) \\
=& \sum_{\sigma, \tau \in S_k}
\delta_{\sigma}(\bm{i},\bm{j})
\Wg^{\mathrm{U}}(\sigma^{-1} \tau;n)
\sum_{\bm{r}=(r_1, \dots, r_k)}
\delta_{\tau}(\bm{r},\bm{r})
E[d_{r_1} d_{r_2} \cdots d_{r_k}].
\end{align*}
To conclude the proof, we have to show:
For $\tau \in S_k$ and a diagonal matrix $D=\diag(d_1,\dots,d_n)$,
\begin{equation} \label{eq:unitary-combinatorial}
\sum_{\bm{r}=(r_1, \dots, r_k)}
\delta_{\tau}(\bm{r},\bm{r})
d_{r_1} d_{r_2} \cdots d_{r_k} = \Tr_\tau (D).
\end{equation}
We observe that $\delta_{\tau}(\bm{r},\bm{r})$ survives
if and only if all $r_i$ in each cycle of $\tau$ coincide.
Hence,
if $\tau$ has the cycle-type $\mu=(\mu_1,\dots,\mu_l)$, then
$$
\sum_{\bm{r}=(r_1, \dots, r_k)}
\delta_{\tau}(\bm{r},\bm{r})
d_{r_1} d_{r_2} \cdots d_{r_k}=
\sum_{s_1,\dots,s_l} d_{s_1}^{\mu_1} \cdots d_{s_l}^{\mu_l} =
\Tr(D^{\mu_1}) \cdots \Tr(D^{\mu_l}) =
\Tr_\tau(D),
$$
which proves \eqref{eq:unitary-combinatorial}.
\end{proof}

\begin{example}
Let $W$ be as in Theorem \ref{thm:unitary-local-global}.
For each $1 \le i \le n$ and $k \ge 1$,
\begin{equation} \label{eq:ex_unitary_single_entry}
E[w_{ii}^{k}] = \frac{1}{n(n+1) \cdots (n+k-1)}
\sum_{\mu \vdash k} \frac{k!}{z_\mu}
E \left[\prod_{i=1}^{\ell(\mu)} \Tr (W^{\mu_j}) \right]
\end{equation}
summed over all partition $\mu$ of $k$.
Here
$$
z_\mu = \prod_{i \ge 1} i^{m_i(\mu)} \, m_i (\mu)!
$$
with the multiplicities  $m_i(\mu)$ of $i$ in $\mu$.
In fact, Theorem \ref{thm:unitary-local-global} implies
the identity
$E[w_{ii}^{k}] = \sum_{\sigma \in S_k}
\Wg^{\mathrm{U}} (\sigma;n)
\cdot \sum_{\tau \in S_k} E [\Tr_\tau (W)]$,
and the claim therefore is obtained by the following two
known facts:
$$
\sum_{\sigma \in S_k}\Wg^{\mathrm{U}}(\sigma;n)
= \frac{1}{n(n+1) \cdots (n+k-1)};
$$
the number of permutations in $S_k$ of cycle-type $\mu$
is $k!/z_\mu$.
When $k=1$ the equation \eqref{eq:ex_unitary_single_entry} gives a trivial identity
$E[w_{ii}] = \frac{1}{n} E[\Tr (W)]$.
When $k=2,3$, it gives
\begin{align*}
E[w_{ii}^2] =& \frac{1}{n(n+1)} (E[\Tr(W^2)] +E[\Tr (W)^2]); \\
E[w_{ii}^3] =& \frac{1}{n(n+1)(n+2)}
(2E[\Tr(W^3)] +3 E[\Tr(W^2) \Tr(W)]
+E[\Tr (W)^3]).
\end{align*}
\end{example}

\subsubsection{Orthogonal case}

\begin{thm}\label{thm:orthogonal-local-global}
Let $W=(w_{ij})$ be an $n \times n$ real symmetric
random matrix with the invariant property
such that $UW\trans{U}$ has the same distribution as
$W$ for any orthogonal matrix $U$.
For any sequence $\bm{i}=(i_1,\dots,i_{2k})$, we have
\begin{equation*}
E[w_{i_1 i_2} w_{i_3 i_4} \dots w_{i_{2k-1} i_{2k}}]=
\sum_{\sigma,\tau\in M_{2k}} \delta'_{\sigma}(\bm{i})  \Wg^{\mathrm{O}}(\sigma^{-1} \tau;n) E[\Tr_{\tau}'(W)].
\end{equation*}
\end{thm}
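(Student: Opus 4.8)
The plan is to mirror exactly the strategy used in the unitary case (Theorem~\ref{thm:unitary-local-global}), replacing the unitary diagonalization and unitary Weingarten calculus by their orthogonal counterparts. First I would establish the orthogonal analogue of Lemma~\ref{lem-unitary-case}: since $W$ is real symmetric with $UW\trans{U} \overset{d}{=} W$ for every orthogonal $U$, the spectral theorem lets us write $W = UD\trans{U}$ with $U$ orthogonal and $D = \diag(d_1,\dots,d_n)$ diagonal, chosen as measurable functions of $W$. The same uniqueness-plus-invariance argument as in the unitary proof shows that $U$ must be Haar distributed on $\mr{O}(n)$, and by averaging against an independent Haar-distributed orthogonal $V$ we may take $U$ Haar distributed and independent of $D$.

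Granting this, each entry satisfies $w_{ij} \overset{d}{=} \sum_{r=1}^n u_{ir} d_r u_{jr}$, so the product of $k$ entries $w_{i_1 i_2}\cdots w_{i_{2k-1}i_{2k}}$ expands, after introducing summation indices $\bm{r}=(r_1,\dots,r_k)$, into a sum over $\bm{r}$ of $E[d_{r_1}\cdots d_{r_k}]$ times a degree-$2k$ moment of entries of the Haar orthogonal matrix $U$. I would apply Proposition~\ref{prop:OrthogonalWeingartenCalculus} to that moment. The key bookkeeping point is to match the index assignment: the $2k$ row indices of $U$ are $(i_1, i_2, i_3, i_4,\dots)$ while the $2k$ column indices are $(r_1, r_1, r_2, r_2, \dots, r_k, r_k)$, each $r_s$ appearing for the two factors coming from the single $w$-entry carrying that summation index. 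With this setup the factors $\delta'_\sigma(\bm{i})$ and $\delta'_\tau$ of the column indices appear, and after interchanging the order of summation the whole expression becomes $\sum_{\sigma,\tau\in M_{2k}} \delta'_\sigma(\bm{i}) \Wg^{\mathrm{O}}(\sigma^{-1}\tau;n)\bigl(\sum_{\bm{r}} \delta'_\tau(\text{doubled }\bm{r})\, E[d_{r_1}\cdots d_{r_k}]\bigr)$.

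The final step is the combinatorial identity analogous to \eqref{eq:unitary-combinatorial}, namely that for $\tau\in M_{2k}$ with coset-type $\mu=(\mu_1,\dots,\mu_l)$ the inner sum over the doubled index vector equals $\Tr'_\tau(D)$. Here the pair partition $\tau$ acts on the doubled indices; the surviving terms are those where the $r$-indices are constant along each connected component of the graph $\Gamma$ built from $\tau$ and the standard pairing $t_k$, and each component of vertex-count $2\mu_j$ contributes a factor $\Tr(D^{\mu_j})$, yielding $\prod_j \Tr(D^{\mu_j}) = \Tr'_\tau(D)$ by definition of $\Tr'_\tau$ in \S\ref{sec:NotationR}. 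Since $D$ has the eigenvalue distribution of $W$, $E[\Tr'_\tau(D)] = E[\Tr'_\tau(W)]$, completing the proof.

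The main obstacle I anticipate is the careful tracking of the index conventions in the second step: unlike the unitary case, where conjugate and non-conjugate entries give a clean separation of $\bm{i}$ and $\bm{j}$ roles, here all $2k$ factors are non-conjugate entries of the same real orthogonal $U$, so one must verify that the doubling of the summation indices $\bm{r}$ and the pairing structure of $M_{2k}$ align correctly with $\delta'_\tau$ and with the definition of $\Tr'_\tau$. Once that alignment is checked, the proof is a routine transcription of the unitary argument.
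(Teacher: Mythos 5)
Your proposal is correct and takes essentially the same approach as the paper's proof: the same orthogonal analogue of Lemma~\ref{lem-unitary-case} (spectral decomposition $W=UD\trans{U}$ with Haar $U$ independent of $D$), the same application of Proposition~\ref{prop:OrthogonalWeingartenCalculus} to the doubled column-index vector $\tilde{\bm{r}}=(r_1,r_1,\dots,r_k,r_k)$, and the same combinatorial identity $\sum_{\bm{r}}\delta'_{\tau}(\tilde{\bm{r}})\,d_{r_1}\cdots d_{r_k}=\Tr'_{\tau}(D)$ obtained from the constancy of the $r$-indices on the connected components of $\Gamma(\tau)$. The index-alignment subtlety you flag is exactly the point the paper handles via its equation \eqref{eq:orthogonal-combinatorial}, and your treatment of it is sound.
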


\begin{proof}
As in Lemma \ref{lem-unitary-case},
$W$ has the same distribution $UD \trans{U}$,
where $U=(u_{ij})$ and $D=\diag(d_1,\dots,d_n)$ are
orthogonal and diagonal matrices respectively and
$U,D$ are independent.
We have
\begin{align*}
&E[w_{i_1 i_2} w_{i_3 i_4} \dots w_{i_{2k-1} i_{2k}}] \\
=&
\sum_{\bm{r}=(r_1, \dots, r_k)} E[d_{r_1} d_{r_2} \cdots d_{r_k}]
\cdot E[u_{i_1 r_1} u_{i_2 r_1} u_{i_3 r_2} u_{i_4 r_2} \cdots u_{i_{2k-1} r_k} u_{i_{2k}r_k}],
\end{align*}
and the orthogonal Weingarten calculus (Proposition \ref{prop:OrthogonalWeingartenCalculus}) gives
\begin{align*}
=& \sum_{\bm{r}=(r_1, \dots, r_k)} E[d_{r_1} d_{r_2} \cdots d_{r_k}]
\sum_{\sigma, \tau \in M_{2k}}
\delta_{\sigma}'(\bm{i}) \delta_{\tau}'(\tilde{\bm{r}})
\Wg^{\mathrm{O}}(\sigma^{-1} \tau;n) \\
=& \sum_{\sigma, \tau \in M_{2k}}
\delta_{\sigma}'(\bm{i})
\Wg^{\mathrm{O}}(\sigma^{-1} \tau;n)
\sum_{\bm{r}=(r_1, \dots, r_k)}
\delta_{\tau}'(\tilde{\bm{r}})
E[d_{r_1} d_{r_2} \cdots d_{r_k}],
\end{align*}
where $\tilde{\bm{r}}=(r_1,r_1,r_2,r_2,\dots,r_k,r_k)$ for
each $\bm{r}=(r_1,r_2,\dots,r_k)$.

Recall notation defined in section \ref{sec:NotationR}.
To conclude the proof, we have to show:
For $\tau \in S_{2k}$ and a diagonal matrix $D=\diag(d_1,\dots,d_n)$,
\begin{equation} \label{eq:orthogonal-combinatorial}
\sum_{\bm{r}=(r_1, \dots, r_k)}
\delta_{\tau}'(\tilde{\bm{r}})
d_{r_1} d_{r_2} \cdots d_{r_k} = \Tr_\tau' (D).
\end{equation}
This equation follows from the following fact:

 $\delta_{\tau}'(\tilde{\bm{r}})$ survives
if and only if all $r_i$ in each component of the graph
$\Gamma(\tau)$ coincide.
\end{proof}

\begin{example}
Let $W$ be as in Theorem \ref{thm:orthogonal-local-global}.
For each $1 \le i \le n$ and $k \ge 1$,
\begin{equation} \label{eq:ex_orthogonal_single_entry}
E[w_{ii}^{k}] = \frac{1}{n(n+2) \cdots (n+2k-2)}
\sum_{\mu \vdash k} \frac{2^k k!}{2^{\ell(\mu)} z_\mu}
E \left[\prod_{i=1}^{\ell(\mu)} \Tr (W^{\mu_j}) \right].
\end{equation}
In fact, Theorem \ref{thm:orthogonal-local-global} with the following two
facts gives the claim:
$$
\sum_{\sigma \in M_{2k} }\Wg^{\mathrm{O}}(\sigma;n)
= \frac{1}{n(n+2) \cdots (n+2k-2)};
$$
the number of pair partitions in $M_{2k}$ of coset-type $\mu$
is $2^k k!/(2^{\ell(\mu)}z_\mu)$.
When $k=2,3$, \eqref{eq:ex_orthogonal_single_entry} gives
\begin{align*}
E[w_{ii}^2] =& \frac{1}{n(n+2)} (2E[\Tr(W^2)] +E[\Tr (W)^2]); \\
E[w_{ii}^3] =& \frac{1}{n(n+2)(n+4)}
(8E[\Tr(W^3)] +6 E[\Tr(W^2) \Tr(W)]
+E[\Tr (W)^3]).
\end{align*}
\end{example}

\subsection{Left-right invariance}

\subsubsection{Unitary case}

\begin{thm}  \label{thm:unitary-LRinvariance}
Let $X$ be a complex $n \times p$ random matrix
which has the same distribution as $UXV$ for
any unitary matrices $U,V$.
For four sequences
$\bm{i}=(i_1,\dots,i_{k})$, $\bm{j}=(j_1,\dots,j_{k})$,
$\bm{i}'=(i_1',\dots,i_k')$, $\bm{j}'=(j_1',\dots,j_k')$,
$$
E[x_{i_1 j_1} \cdots x_{i_k j_k}
\overline{x_{i_1'j_1'} \cdots x_{i_k' j_k'}}] =
\sum_{\sigma_1,\sigma_2,\pi \in S_k}
\delta_{\sigma_1} (\bm{i},\bm{i}')
\delta_{\sigma_2} (\bm{j},\bm{j}') \Wg^{\mathrm{U}}(\pi \sigma_1^{-1} \sigma_2;n,p)
E[\Tr_{\pi} (X X^*)],
$$
where $\Wg^{\mathrm{U}}(\cdot;n,p)$ is defined in \eqref{eq:double-unitaryWg}.
\end{thm}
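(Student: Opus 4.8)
The plan is to adapt the strategy of Theorem \ref{thm:unitary-local-global}, but now invoking two independent Haar unitaries supplied by the singular value decomposition rather than one. First I would prove an SVD analogue of Lemma \ref{lem-unitary-case}: the left-right invariance forces $X$ to have the same distribution as $UDV$, where $U$ is an $n \times n$ Haar unitary, $V$ is a $p \times p$ Haar unitary, $D$ is the $n \times p$ diagonal matrix of singular values, and $U,D,V$ are mutually independent. The argument mirrors the earlier lemma: writing $X=UDV$ with $U,V,D$ measurable functions of $X$, the identity $AXB=(AU)D(VB)$ for deterministic $A \in \mathrm{U}(n)$, $B \in \mathrm{U}(p)$ together with uniqueness of the decomposition shows that $AU$ and $U$ have the same distribution and that $VB$ and $V$ have the same distribution; uniqueness of Haar measure then forces $U,V$ to be Haar, and replacing $A,B$ by independent Haar unitaries yields the independence of $U,V$ from $D$.

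Next I expand each entry as $x_{i_s j_s}=\sum_{r_s} u_{i_s r_s} d_{r_s} v_{r_s j_s}$ (the index $r_s$ effectively ranging over $[\min(n,p)]$, since $d_r=0$ beyond that, which reconciles the two dimension parameters), and similarly for the conjugated factors. Taking the expectation and using independence factorizes the moment into a $D$-expectation, a $2k$-fold moment of $U$, and a $2k$-fold moment of $V$, summed over internal indices $\bm{r}=(r_1,\dots,r_k)$ and $\bm{r}'=(r_1',\dots,r_k')$. Applying Proposition \ref{prop:UnitaryWeingartenCalculus} to the $U$-factor with dimension $n$ produces $\sum_{\sigma_1,\alpha} \delta_{\sigma_1}(\bm{i},\bm{i}')\, \delta_{\alpha}(\bm{r},\bm{r}')\, \Wg^{\mathrm{U}}(\sigma_1^{-1}\alpha;n)$, and to the $V$-factor with dimension $p$ produces $\sum_{\beta,\sigma_2} \delta_{\beta}(\bm{r},\bm{r}')\, \delta_{\sigma_2}(\bm{j},\bm{j}')\, \Wg^{\mathrm{U}}(\beta^{-1}\sigma_2;p)$.

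The combinatorial heart is the sum over $\bm{r},\bm{r}'$ of the two shared deltas against the $D$-moment, which is the analogue of \eqref{eq:unitary-combinatorial}. The factor $\delta_{\alpha}(\bm{r},\bm{r}')\,\delta_{\beta}(\bm{r},\bm{r}')$ forces $r_s'=r_{\alpha(s)}=r_{\beta(s)}$, so $\bm{r}'$ is determined and $\bm{r}$ is constant on the cycles of $\gamma:=\beta\alpha^{-1}$; since the singular values are real the $D$-product collapses to $\prod_s d_{r_s}^2$, and summing over such $\bm{r}$ gives $\Tr_{\gamma}(DD^*)$. Because $XX^*=U(DD^*)U^*$, each $\Tr((XX^*)^m)=\Tr((DD^*)^m)$, so this equals $\Tr_{\beta\alpha^{-1}}(XX^*)$ and its expectation enters. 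At this stage the moment reads $\sum_{\sigma_1,\sigma_2,\alpha,\beta} \delta_{\sigma_1}(\bm{i},\bm{i}')\, \delta_{\sigma_2}(\bm{j},\bm{j}')\, \Wg^{\mathrm{U}}(\sigma_1^{-1}\alpha;n)\, \Wg^{\mathrm{U}}(\beta^{-1}\sigma_2;p)\, E[\Tr_{\beta\alpha^{-1}}(XX^*)]$.

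The final step, which I expect to be the main obstacle, is to collapse the double sum over $\alpha,\beta$ into the single double-parameter Weingarten function of \eqref{eq:double-unitaryWg}. I would reparametrize by $\pi=\beta\alpha^{-1}$ and $\tau=\sigma_1^{-1}\alpha$; the sum over $\tau$ then becomes exactly the convolution $\Wg^{\mathrm{U}}(\cdot;n)*\Wg^{\mathrm{U}}(\cdot;p)$ evaluated at $\sigma_1^{-1}\pi^{-1}\sigma_2$, that is, $\Wg^{\mathrm{U}}(\sigma_1^{-1}\pi^{-1}\sigma_2;n,p)$. Finally, since $E[\Tr_{\pi}(XX^*)]$ depends only on the cycle type of $\pi$, re-indexing by the cycle-type-preserving bijection $\pi \mapsto \sigma_1^{-1}\pi^{-1}\sigma_1$ of $S_k$ rewrites the argument $\sigma_1^{-1}\pi^{-1}\sigma_2$ as $\pi\sigma_1^{-1}\sigma_2$ and yields precisely the claimed formula. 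The delicate points here are tracking the order of the non-commuting permutations through the two convolutions and checking that the class-function property of $\Tr_{\pi}$ legitimizes the closing relabeling.
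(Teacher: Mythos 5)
Your proposal is correct and takes essentially the same route as the paper's proof: the same SVD analogue of Lemma \ref{lem-unitary-case} with $U,D,V$ mutually independent, two applications of Proposition \ref{prop:UnitaryWeingartenCalculus}, the collapse of the internal index sums to $\Tr_{\tau_1^{-1}\tau_2}(XX^*)$ via \eqref{eq:unitary-combinatorial}, and the convolution identity defining $\Wg^{\mathrm{U}}(\cdot;n,p)$. The only difference is cosmetic bookkeeping in the last step, where you close with the cycle-type-preserving relabeling $\pi \mapsto \sigma_1^{-1}\pi^{-1}\sigma_1$ while the paper substitutes $\tau_1=\tau_2\pi$ and invokes the inversion-invariance and centrality of the Weingarten function.
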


\begin{proof}
As in Lemma \ref{lem-unitary-case},
we can see that
$X$ has the same distribution $UD V^*$,
where $U$ and $V$ are Haar distributed $n \times n$
and $p \times p$ random unitary matrices, respectively,
and $D$ is an $n \times p$ diagonal matrix whose singular values have the same distribution as those of $X$.
Moreover, $D, U, V$ are independent.

Since each entry $x_{ij}$ has the same distribution as
$\sum_{r=1}^{\min (n,p)} u_{ir} d_r \overline{v_{jr}}$,
it follows from the independence of $U$, $D$, and $V$ that
\begin{align*}
&E[x_{i_1 j_1} \cdots x_{i_k j_k}
\overline{x_{i_1'j_1'} \cdots x_{i_k' j_k'}}
] \\
=& \sum_{\bm{r}=(r_1,\dots,r_k)}
\sum_{\bm{r'}=(r_1',\dots,r_k')}
E[d_{r_1} \cdots d_{r_k} \overline{d_{r_1'} \cdots d_{r_k'}}] \\
& \qquad
\times E[u_{i_1 r_1} \cdots u_{i_k r_k} \overline{u_{i_1'r_1'}
\cdots u_{i_k' r_k'}}] \times
E[\overline{v_{j_1 r_1} \cdots v_{j_k r_k}} v_{j_1'r_1'}
\cdots v_{j_k' r_k'}].
\end{align*}
Here $r_s,r_s'$ run over $1,2,\dots, \min (p,n)$.
From
the unitary Weingarten calculus
(Proposition  \ref{prop:UnitaryWeingartenCalculus}),
we have
\begin{align}
=& \sum_{\sigma_1,\tau_1,\sigma_2,\tau_2 \in S_k}
\delta_{\sigma_1}(\bm{i},\bm{i}') \delta_{\sigma_2}(\bm{j},\bm{j}')  \Wg^{\mathrm{U}}(\sigma_1^{-1}\tau_1;n)
\Wg^{\mathrm{U}}(\sigma_2^{-1}\tau_2;p)  \notag \\
& \times
\sum_{\bm{r}=(r_1,\dots,r_k)}
\sum_{\bm{r'}=(r_1',\dots,r_k')} \delta_{\tau_1} (\bm{r},
\bm{r}')\delta_{\tau_2} (\bm{r}, \bm{r}')
E[d_{r_1} \cdots d_{r_k} \overline{d_{r_1'} \cdots d_{r_k'}}].
\label{eq:unitary-LR1}
\end{align}
Since $\delta_{\tau_1} (\bm{r},\bm{r}') \delta_{\tau_2} (\bm{r},\bm{r}') =1$
if and only if $r_s'=r_{\tau_2(s)} \ (1 \le s \le k)$ and
$\delta_{\tau_1^{-1}\tau_2} (\bm{r},\bm{r})=1$,
we have
\begin{align*}
&\sum_{\bm{r}=(r_1,\dots,r_k)}
\sum_{\bm{r'}=(r_1',\dots,r_k')} \delta_{\tau_1} (\bm{r},
\bm{r}')\delta_{\tau_2} (\bm{r}, \bm{r}')
d_{r_1} \cdots d_{r_k} \overline{d_{r_1'} \cdots d_{r_k'}} \\
=& \sum_{\bm{r}=(r_1,\dots,r_k)}
\delta_{\tau_1^{-1}\tau_2} (\bm{r},\bm{r})
d_{r_1} \cdots d_{r_k} \overline{d_{r_1} \cdots d_{r_k}},
\end{align*}
which equals $\Tr_{\tau_1^{-1} \tau_2} (D D^*)$ by
\eqref{eq:unitary-combinatorial}.
Substituting this fact into \eqref{eq:unitary-LR1}, we have
\begin{align*}
E[x_{i_1 j_1} \cdots x_{i_k j_k}
\overline{x_{i_1'j_1'} \cdots x_{i_k' j_k'}}
]
=&
\sum_{\sigma_1,\sigma_2\in S_k}
\delta_{\sigma_1}(\bm{i},\bm{i}') \delta_{\sigma_2}(\bm{j},\bm{j}') \\
&\times \sum_{\tau_1,\tau_2 \in S_k}
\Wg^{\mathrm{U}}(\sigma_1^{-1}\tau_1;n)
\Wg^{\mathrm{U}}(\sigma_2^{-1}\tau_2;p)E[\Tr_{\tau_1^{-1} \tau_2} (X X^*)].
\end{align*}
The proof of the theorem is follows from the following observation.
\begin{align*}
&\sum_{\tau_1,\tau_2 \in S_k}
\Wg^{\mathrm{U}}(\sigma_1^{-1}\tau_1;n) \Wg^{\mathrm{U}}(\sigma_2^{-1}\tau_2;p)
\Tr_{\tau_1^{-1} \tau_2} (X X^*) \\
=& \sum_{\tau_2, \pi \in S_k}
\Wg^{\mathrm{U}}(\sigma_1^{-1} \tau_2 \pi;n)
\Wg^{\mathrm{U}}(\sigma_2^{-1} \tau_2;p) \Tr_{\pi^{-1}} (X X^*)
\qquad (\because \tau_1=\tau_2 \pi) \\
=&
\sum_{\tau_2, \pi \in S_k}
\Wg^{\mathrm{U}}(\pi \sigma_1^{-1} \tau_2;n)
\Wg^{\mathrm{U}}(\tau_2^{-1}\sigma_2;p) \Tr_{\pi^{-1}} (X X^*)
\qquad (\because \Wg^{\mathrm{U}}(\sigma;z)= \Wg^{\mathrm{U}}(\sigma^{-1};z))
\\
=&
\sum_{\pi \in S_k} \Wg^{\mathrm{U}}(\pi \sigma_1^{-1} \sigma_2;n,p)
E[\Tr_{\pi} (X X^*)].
\end{align*}
At the last equality we have used the definition of
$\Wg^{\mathrm{U}}(\cdot;n,p)$.
\end{proof}

\begin{example}
If $X$ satisfies the condition of
Theorem \ref{thm:unitary-LRinvariance}, we have
$$
E[x_{i j} \overline{x_{i' j'}}] = \delta_{i i'} \delta_{j j'}
\frac{1}{n p} E[\Tr (X X^*)].
$$
\end{example}

\subsubsection{Orthogonal case}

\begin{thm} \label{thm:orthogonal-LRinvariance}
Let $X$ be a real $n \times p$ random matrix
which has the same distribution as $UXV$ for
any orthogonal matrices $U,V$.
For two sequences
$\bm{i}=(i_1,\dots,i_{2k})$ and $\bm{j}=(j_1,\dots,j_{2k})$,
$$
E[x_{i_1 j_1} \cdots x_{i_{2k} j_{2k}}]
= \sum_{\sigma_1,\sigma_2,\pi \in M_{2k}}
\delta_{\sigma_1}' (\bm{i})
\delta_{\sigma_2}' (\bm{j}) \Wg^{\mathrm{O}}(\pi \sigma_1^{-1} \sigma_2;n,p)
E[\Tr_{\pi} (X \trans{X})],
$$
where $\Wg^{\mathrm{O}}(\cdot;n,p)$ is defined in \eqref{eq:double-orthogonalWg}.
\end{thm}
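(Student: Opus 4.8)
The plan is to imitate the proof of Theorem~\ref{thm:unitary-LRinvariance} step by step, trading the unitary Weingarten calculus for the orthogonal one and cycle-types for coset-types. First I would reproduce the singular value reduction used in Lemma~\ref{lem-unitary-case}: the left-right orthogonal invariance lets us assume that $X$ has the same distribution as $U D \trans{V}$, where $U \in \mathrm{O}(n)$ and $V \in \mathrm{O}(p)$ are Haar distributed, $D$ is an $n\times p$ matrix carrying the singular values of $X$, and $U,D,V$ are independent. Each entry then has the same distribution as $x_{ij}=\sum_{r} u_{ir}\,d_r\,v_{jr}$ with everything real, so expanding the product of the $2k$ entries and using independence gives
\[
E\Big[\prod_{s=1}^{2k} x_{i_s j_s}\Big]
= \sum_{\bm{r}=(r_1,\dots,r_{2k})} E[d_{r_1}\cdots d_{r_{2k}}]\,
E\Big[\prod_{s=1}^{2k} u_{i_s r_s}\Big]\,
E\Big[\prod_{s=1}^{2k} v_{j_s r_s}\Big].
\]

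Next I would apply the orthogonal Weingarten calculus (Proposition~\ref{prop:OrthogonalWeingartenCalculus}) to the $U$-average and to the $V$-average separately, producing sums over $\sigma_1,\tau_1\in M_{2k}$ (with parameter $n$) and over $\sigma_2,\tau_2\in M_{2k}$ (with parameter $p$), together with the factors $\delta'_{\sigma_1}(\bm{i})$ and $\delta'_{\sigma_2}(\bm{j})$. After exchanging the order of summation, the whole dependence on $D$ is concentrated in $\sum_{\bm r}\delta'_{\tau_1}(\bm r)\delta'_{\tau_2}(\bm r)\,d_{r_1}\cdots d_{r_{2k}}$, and the combinatorial heart of the argument is the orthogonal analogue of \eqref{eq:unitary-combinatorial}. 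Here I would observe that $\delta'_{\tau_1}(\bm r)\delta'_{\tau_2}(\bm r)$ survives precisely when $\bm r$ is constant on each connected component of the graph on $\{1,\dots,2k\}$ whose edges are the pairs of $\tau_1$ together with the pairs of $\tau_2$; every vertex has degree two, so the components are even cycles, and relabelling the vertices by $\tau_1^{-1}$ identifies this graph with $\Gamma(\tau_1^{-1}\tau_2)$, whence the component half-sizes form the coset-type $\mu=(\mu_1,\dots,\mu_l)$ of $\tau_1^{-1}\tau_2$ in the sense of \S\ref{sec:NotationR}. The sum therefore collapses to $\prod_{j=1}^{l}\Tr\big((D\trans{D})^{\mu_j}\big)=\Tr_{\tau_1^{-1}\tau_2}'(D\trans{D})$, and since $D\trans{D}$ has the same singular value law as $X\trans{X}$ this leaves
\[
E\Big[\prod_{s=1}^{2k} x_{i_s j_s}\Big]
= \sum_{\sigma_1,\sigma_2 \in M_{2k}} \delta'_{\sigma_1}(\bm i)\,\delta'_{\sigma_2}(\bm j)
\sum_{\tau_1,\tau_2 \in M_{2k}} \Wg^{\mathrm{O}}(\sigma_1^{-1}\tau_1;n)\,\Wg^{\mathrm{O}}(\sigma_2^{-1}\tau_2;p)\,E[\Tr_{\tau_1^{-1}\tau_2}'(X\trans{X})].
\]

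The final step is to contract the inner double sum over $\tau_1,\tau_2$ into the single $\sharp$-convolution defining $\Wg^{\mathrm{O}}(\cdot;n,p)$ in \eqref{eq:double-orthogonalWg}. Using the inversion symmetry $\Wg^{\mathrm{O}}(\sigma;z)=\Wg^{\mathrm{O}}(\sigma^{-1};z)$ together with the $H_k$-biinvariance of the two Weingarten functions and of the coset-type trace $\Tr_{\sigma}'$ (which depends only on the coset-type), I would reindex by $\pi\in M_{2k}$ recording the coset-type of $\tau_1^{-1}\tau_2$ and recognise the surviving sum over a single representative as $(\Wg^{\mathrm{O}}(\cdot;n)\sharp\Wg^{\mathrm{O}}(\cdot;p))(\pi\sigma_1^{-1}\sigma_2)=\Wg^{\mathrm{O}}(\pi\sigma_1^{-1}\sigma_2;n,p)$.

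I expect this last reorganisation to be the main obstacle. In the unitary case the analogous contraction is a transparent substitution $\tau_1=\tau_2\pi$ carried out inside the group $S_k$; here $M_{2k}$ is merely a system of representatives for $S_{2k}/H_k$ and is not closed under multiplication, so $\tau_2\pi$ generally leaves $M_{2k}$ and the naive substitution is unavailable. The careful route is to lift the sums over $M_{2k}$ to sums over $S_{2k}$ through the relation $f_1\sharp f_2=(2^k k!)^{-1}f_1*f_2$, absorbing the resulting factors of $|H_k|=2^k k!$ by the biinvariance of the integrands, and only then to apply the ordinary central manipulations. Once this bookkeeping is in place the definition $\Wg^{\mathrm{O}}(\cdot;n,p)=\Wg^{\mathrm{O}}(\cdot;n)\sharp\Wg^{\mathrm{O}}(\cdot;p)$ closes the argument; the possible swap of $\pi$ for $\pi^{-1}$ is harmless because the coset-type, and hence both $\Wg^{\mathrm{O}}$ and $\Tr_{\sigma}'$, is invariant under inversion.
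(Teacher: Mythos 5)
Your proposal is correct and follows essentially the same route as the paper's proof: the singular value reduction $X\sim UD\trans{V}$, a double application of Proposition \ref{prop:OrthogonalWeingartenCalculus}, the identification $\sum_{\bm{r}}\delta'_{\tau_1}(\bm{r})\delta'_{\tau_2}(\bm{r})\,d_{r_1}\cdots d_{r_{2k}}=\Tr'_{\tau_1^{-1}\tau_2}(D\trans{D})$ via the components of $\Gamma(\tau_1^{-1}\tau_2)$, and the final contraction into $\Wg^{\mathrm{O}}(\cdot;n,p)$. The bookkeeping you single out as the main obstacle --- lifting sums over $M_{2k}$ to $(2^k k!)^{-1}\sum_{\sigma\in S_{2k}}$ and exploiting $H_k$-biinvariance before performing the unitary-style substitutions --- is precisely the parenthetical instruction with which the paper concludes its own proof.
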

\begin{proof}
In a similar way to the proof of Theorem \ref{thm:unitary-LRinvariance},
we have
\begin{align*}
&E[x_{i_1 j_1} \cdots x_{i_{2k} j_{2k}}] \\
=& \sum_{\sigma_1,\sigma_2,\tau_1,\tau_2 \in M_{2k}}
\delta_{\sigma_1}' (\bm{i})
\delta_{\sigma_2}' (\bm{j})
\Wg^{\mathrm{O}}(\sigma_1^{-1} \tau_1;n)
\Wg^{\mathrm{O}}(\sigma_2^{-1} \tau_2;p) \\
& \times \sum_{\bm{r}=(r_1,\dots,r_{2k})}
\delta_{\tau_1}' (\bm{r}) \delta_{\tau_2}' (\bm{r})
E[d_{r_1} \cdots d_{r_{2k}}].
\end{align*}
We observe that
$\delta_{\tau_1}' (\bm{r}) \delta_{\tau_2}' (\bm{r})=1$
if and only if
all $r_i$ in each component of $\Gamma(\tau^{-1}_1\tau_2)$ coincide.
Letting $(\mu_1,\dots,\mu_l)$ to be a coset-type of
$\tau_1^{-1}\tau_2$ we have
$$
\sum_{\bm{r}=(r_1,\dots,r_{2k})}
\delta_{\tau_1}' (\bm{r}) \delta_{\tau_2}' (\bm{r})
d_{r_1} \cdots d_{r_{2k}}
= \sum_{s_1,\dots,s_l} d_{s_1}^{2\mu_1} \cdots d_{s_l}^{2\mu_l}
= \Tr_{\tau_1^{-1} \tau_2}'(D \trans{D})
= \Tr_{\tau_1^{-1} \tau_2}'(X \trans{X}).
$$
We thus have proved
$$
E[x_{i_1 j_1} \cdots x_{i_{2k} j_{2k}}]
= \sum_{\sigma_1,\sigma_2,\tau_1,\tau_2 \in M_{2k}}
\delta_{\sigma_1}' (\bm{i})
\delta_{\sigma_2}' (\bm{j})
\Wg^{\mathrm{O}}(\sigma_1^{-1} \tau_1;n)
\Wg^{\mathrm{O}}(\sigma_2^{-1} \tau_2;p) \Tr'_{\tau_1^{-1}\tau_2} (X\trans{X}).
$$
The remaining step is shown in a similar way to
the proof of Theorem \ref{thm:unitary-LRinvariance}.
(Replace a sum $\sum_{\sigma \in M_{2k}}$ by $(2^k k!)^{-1}\sum_{\sigma \in S_{2k}}$.)
\end{proof}

\begin{example}
If $X$ satisfies the condition of
Theorem \ref{thm:orthogonal-LRinvariance}, we have
$$
E[x_{i_1 j_1} x_{i_2 j_2}] = \delta_{i_1 i_2} \delta_{j_1 j_2}
\frac{1}{n p} E[\Tr (X \trans{X})].
$$
\end{example}

\section{Application to statistics} \label{sec:application}

\subsection{Pseudo-inverse of a Ginibre matrix}

\subsubsection{Complex case}

An $n \times p$ complex
Ginibre matrix $G$ is a random matrix
whose columns are i.i.d. and distributed as
$n$-dimensional normal distribution
$\mathrm{N}_\mathbb{C}(0,\Sigma)$,
where $\Sigma$ is an $n \times n$ positive definite Hermitian
matrix.
If $G=UDV^*$ is a singular value decomposition of $G$,
the matrix $G^-=VD^{-}U^*$ is the pseudo-inverse of $G$, where $D^{-}$ is the $p\times n$
diagonal obtained by inverting point wise the entries of $D$ along the diagonal (and zero if the diagonal entry is zero).

Note that it is easy to check that the pseudo-inverse is well-defined in the sense that it does not depend on the
decomposition $G=UDV^*$. Actually, in the same vein as in section
\ref{sec:Wg} where the pseudo-inverse is introduced in the context of Weingarten functions,
the properties $GG^-G=G$, $G^-GG^-=G^-$ together with the fact that $GG^-,G^-G$ are selfadjoint,
suffice to define uniquely the inverse.
If the matrix is invertible, the pseudo-inverse is the inverse (this notion of pseudo-inverse is sometimes known
as the Moore-Penrose pseudo inverse).


\begin{thm} \label{thm:complex-Ginibre-inverse}
Let $G^{-}=(g^{ij})$ be the pseudo-inverse matrix
of an $n \times p$ complex Ginibre matrix
associated with an $n \times n$ positive definite Hermitian matrix $\Sigma$.
Put $q=p-n$ and suppose $n, q \ge k$.
For four sequences $\bm{i}=(i_1,\dots,i_k)$,
$\bm{j}=(j_1,\dots,j_k)$, $\bm{i}'=(i_1',\dots,i_k')$, and
$\bm{j}'=(j_1',\dots,j_k')$, we have
\begin{align*}
& E[ g^{i_1 j_1} \cdots g^{i_k j_k} \overline{g^{i_1' j_1'} \cdots g^{i_k' j_k'}}] \\
=& (-1)^k \sum_{\sigma,\rho \in S_k}
 \delta_{\sigma}(\bm{i},\bm{i}')  \Wg^{\mathrm{U}}(\sigma^{-1}\rho ;p,-q)
 \overline{
(\Sigma^{-1})_{j_{\rho(1)} j_1'} \cdots  (\Sigma^{-1})_{j_{\rho(k)} j_k'} },
\end{align*}
where $\Wg^{\mathrm{U}}(\cdot ;p,-q)$ is defined in \eqref{eq:double-unitaryWg}.
\end{thm}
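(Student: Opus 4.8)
The plan is to reduce everything to the standard case $\Sigma = I$, where full bi-unitary invariance is available, and to reinsert $\Sigma$ by hand at the end. Write $G = \Sigma^{1/2} G_0$, where $G_0$ is an $n\times p$ standard complex Ginibre matrix (columns i.i.d.\ $\mathrm{N}_{\mathbb{C}}(\bm{0}, I_n)$). Since $q = p-n \ge k \ge 1$ we have $p>n$, so $G$ and $G_0$ have full row rank almost surely and $G^- = G^*(GG^*)^{-1}$; a direct computation then gives $G^- = G_0^-\,\Sigma^{-1/2}$, where $G_0^- = G_0^*(G_0G_0^*)^{-1}$. The point of this factorization is that $G_0$ is bi-unitarily invariant, hence so is $Y := G_0^-$ (from the identity $(UG_0V)^- = V^*G_0^-U^*$ for unitary $U,V$). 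Thus $Y$, which is $p\times n$, satisfies the hypothesis of Theorem \ref{thm:unitary-LRinvariance} with the roles of $n$ and $p$ exchanged.

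Next I would expand each entry as $g^{ij} = \sum_a Y_{ia}(\Sigma^{-1/2})_{aj}$ and pull the deterministic $\Sigma^{-1/2}$-factors out of the expectation, leaving a mixed moment of $Y$. Theorem \ref{thm:unitary-LRinvariance} applied to $Y$ produces a sum over $\sigma_1,\sigma_2,\pi \in S_k$ carrying $\delta_{\sigma_1}(\bm{i},\bm{i}')$, the internal constraint $\delta_{\sigma_2}(\bm{a},\bm{b})$, the factor $\Wg^{\mathrm{U}}(\pi\sigma_1^{-1}\sigma_2;p,n)$, and the global moment $E[\Tr_\pi(YY^*)]$. I then carry out two reductions. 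The constraint forces $b_s = a_{\sigma_2(s)}$; summing each shared internal index against the $\Sigma^{-1/2}$-factors, and using that $\Sigma^{-1/2}$ is Hermitian, merges two $\Sigma^{-1/2}$-factors into one entry of $\Sigma^{-1}$, yielding exactly $\overline{(\Sigma^{-1})_{j_{\sigma_2(1)} j_1'}\cdots(\Sigma^{-1})_{j_{\sigma_2(k)} j_k'}}$ and identifying the index $\rho$ of the statement with $\sigma_2$. Separately, comparing singular values of $G_0$ and $G_0^-$ gives $\Tr((YY^*)^m) = \Tr(W_0^{-m})$ for every $m$, where $W_0 = G_0 G_0^*$ is the standard complex Wishart matrix, so that $E[\Tr_\pi(YY^*)] = E[\Tr_\pi(W_0^{-1})]$.

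Proposition \ref{prop-complex-Wishart-inverse} now applies to $W_0$ with $\Sigma = I$ (hence $\Tr_\tau(I_n) = n^{\kappa(\tau)}$), which is legitimate because $q \ge k$, and gives
\[
E[\Tr_\pi(W_0^{-1})] = (-1)^k \sum_{\tau\in S_k}\Wg^{\mathrm{U}}(\pi\tau^{-1};-q)\,n^{\kappa(\tau)}.
\]
Substituting this and evaluating the resulting double sum over $\pi$ and $\tau$ is the heart of the argument, and I expect it to be the main obstacle. Since $n^{\kappa(\cdot)}$ and every Weingarten function in sight depend only on cycle type, they lie in the commutative centre $\mathcal{Z}(L(S_k))$ and are invariant under inversion; this lets me rewrite both index sums as convolutions. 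Concretely, $\sum_\tau \Wg^{\mathrm{U}}(\pi\tau^{-1};-q)\,n^{\kappa(\tau)} = (n^{\kappa(\cdot)} * \Wg^{\mathrm{U}}(\cdot;-q))(\pi)$, and then the remaining $\pi$-sum against $\Wg^{\mathrm{U}}(\pi\sigma_1^{-1}\sigma_2;p,n)$ becomes a further convolution evaluated at $\sigma_1^{-1}\sigma_2$. The delicate part is the bookkeeping with left/right translates and inverses required to cast each sum in this convolution form.

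The final collapse is then transparent. Writing $\Wg^{\mathrm{U}}(\cdot;p,n) = \Wg^{\mathrm{U}}(\cdot;p)*\Wg^{\mathrm{U}}(\cdot;n)$ and using associativity and commutativity of $*$ on $\mathcal{Z}(L(S_k))$, the accumulated convolution equals
\[
\Wg^{\mathrm{U}}(\cdot;p) * \big(\Wg^{\mathrm{U}}(\cdot;n) * n^{\kappa(\cdot)}\big) * \Wg^{\mathrm{U}}(\cdot;-q).
\]
Because $n \ge k$, the middle factor is $\delta_e$ (the functions $n^{\kappa(\cdot)}$ and $\Wg^{\mathrm{U}}(\cdot;n)$ are genuine inverses), so the expression reduces to $\Wg^{\mathrm{U}}(\cdot;p)*\Wg^{\mathrm{U}}(\cdot;-q) = \Wg^{\mathrm{U}}(\cdot;p,-q)$ by \eqref{eq:double-unitaryWg}. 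Hence the double sum equals $(-1)^k\,\Wg^{\mathrm{U}}(\sigma_1^{-1}\sigma_2;p,-q)$, and renaming $\sigma_1 \to \sigma$ and $\sigma_2\to\rho$ recovers the claimed identity. The two hypotheses enter precisely here: $q \ge k$ to invoke Proposition \ref{prop-complex-Wishart-inverse}, and $n \ge k$ to guarantee $\Wg^{\mathrm{U}}(\cdot;n)*n^{\kappa(\cdot)} = \delta_e$.
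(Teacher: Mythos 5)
Your proof is correct and takes essentially the same route as the paper's: the factorization $G=\Sigma^{1/2}G_0$ with $G^- = G_0^-\,\Sigma^{-1/2}$, the application of Theorem \ref{thm:unitary-LRinvariance} to the bi-unitarily invariant $p\times n$ matrix $G_0^-$, Proposition \ref{prop-complex-Wishart-inverse} with scale $I_n$ producing $n^{\kappa(\tau)}$, and the identical convolution collapse $\Wg^{\mathrm{U}}(\cdot;p)*\Wg^{\mathrm{U}}(\cdot;n)*n^{\kappa(\cdot)}*\Wg^{\mathrm{U}}(\cdot;-q)=\Wg^{\mathrm{U}}(\cdot;p,-q)$, invoking $n\ge k$ and $q\ge k$ exactly where the paper does. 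Your explicit justifications of $G^-=G_0^-\,\Sigma^{-1/2}$ via almost sure full row rank and of $E[\Tr_\pi(YY^*)]=E[\Tr_\pi(W_0^{-1})]$ via matching nonzero singular values simply fill in details that the paper asserts without comment.
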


\begin{proof}
Let $Z$ be an $n \times p$  matrix of i.i.d.
$\mathrm{N}_\mathbb{C}(0,1)$ random variables.
Then it is immediate to see that
$\Sigma^{1/2} Z$ has the same distribution as $G$.
Therefore each $g^{ij}$ has the same distribution as $\sum_{r=1}^n z^{ir} (\Sigma^{-1/2})_{rj}$,
where $Z^{-}=(z^{ij})_{1 \le i \le p, 1 \le j \le n}$ is the pseudo-inverse matrix of $Z$, and hence
\begin{align*}
 E[ g^{i_1 j_1} \cdots g^{i_k j_k} \overline{g^{i_1' j_1'} \cdots g^{i_k' j_k'}}]
 =&\sum_{\bm{r}=(r_1,\dots,r_k)} \sum_{\bm{r}'=(r_1',\dots,r_k')}
 \prod_{s=1}^k (\Sigma^{-1/2})_{r_s j_s} \overline{(\Sigma^{-1/2})_{r_s' j_s'}} \\
  & \qquad \times E[z^{i_1 r_1} \cdots z^{i_k r_k}
  \overline{z^{i_1' r_1'} \cdots z^{i_k' r_k'}}].
\end{align*}
Since $Z^{-}$ is a $p \times n$ matrix satisfying the condition on Theorem \ref{thm:unitary-LRinvariance}, we have
\begin{align*}
& E[z^{i_1 r_1} \cdots z^{i_k r_k}
  \overline{z^{i_1' r_1'} \cdots z^{i_k' r_k'}}] \\
  =& \sum_{\sigma,\rho,\pi \in S_k}
\delta_{\sigma} (\bm{i},\bm{i}')
\delta_{\rho} (\bm{r},\bm{r}')
\Wg^{\mathrm{U}}(\sigma^{-1} \pi \rho;p,n)
E[\Tr_{\pi} (Z^{-} (Z^{-})^*)].
\end{align*}
Moreover, from the condition of $q=p-n \geq k$,
we can apply Proposition \ref{prop-complex-Wishart-inverse}
with
$W=Z Z^*$,  and
$$
E[\Tr_{\pi} (Z^{-} (Z^{-})^*)] =
E[\Tr_{\pi} (W^{-1})] =
(-1)^k \sum_{\tau \in S_k} \Wg^{\mathrm{U}}(\pi \tau^{-1};-q) \Tr_{\tau}(
I_n),
$$
where $I_n$ is the $n \times n$ identity matrix.
Note that $\Tr_\tau(I_n)= n^{\kappa(\tau)}$.
Hence we have obtained
\begin{align*}
&E[ g^{i_1 j_1} \cdots g^{i_k j_k} \overline{g^{i_1' j_1'} \cdots g^{i_k' j_k'}}] \\
=& (-1)^k \sum_{\sigma,\rho, \pi, \tau \in S_k}
\delta_{\sigma} (\bm{i},\bm{i}')
 n^{\kappa(\tau)}
\Wg^{\mathrm{U}}(\sigma^{-1} \pi \rho ;p,n) \Wg^{\mathrm{U}}(\pi^{-1} \tau;-q) \\
& \qquad \times
 \sum_{\bm{r}=(r_1,\dots,r_k)} \sum_{\bm{r}'=(r_1',\dots,r_k')}\delta_{\rho} (\bm{r},\bm{r}')
 \prod_{s=1}^k (\Sigma^{-1/2})_{r_s j_s} \overline{(\Sigma^{-1/2})_{r_s' j_s'}}.
\end{align*}

A direct calculation gives
\begin{align*}
&\sum_{\pi, \tau \in S_k}
 n^{\kappa(\tau)}
\Wg^{\mathrm{U}}(\sigma^{-1} \pi \rho ;p,n) \Wg^{\mathrm{U}}(\pi^{-1} \tau;-q) \\
=& \sum_{\pi, \tau \in S_k} \Wg^{\mathrm{U}}(\rho\sigma^{-1} \pi  ;p,n) \Wg^{\mathrm{U}}(\pi^{-1} \tau;-q)  n^{\kappa(\tau^{-1})} \\
=& [\Wg^{\mathrm{U}}(\cdot  ;p) * \Wg^{\mathrm{U}}(\cdot  ;n) * \Wg^{\mathrm{U}}(\cdot  ;-q)
* n^{\kappa(\cdot)}] (\rho \sigma^{-1}).
\end{align*}
Since $n^{\kappa(\cdot)} * \Wg^{\mathrm{U}}(\cdot  ;n)=\delta_e$ when $n \ge k$,
we have
$$
\sum_{\pi, \tau \in S_k}
 n^{\kappa(\tau)}
\Wg^{\mathrm{U}}(\sigma^{-1} \pi \rho ;p,n) \Wg^{\mathrm{U}}(\pi^{-1} \tau;-q)
=  \Wg^{\mathrm{U}}(\sigma^{-1} \rho;p,-q).
$$

On the other hand, it is easy to see that
\begin{align*}
&\sum_{\bm{r}=(r_1,\dots,r_k)} \sum_{\bm{r}'=(r_1',\dots,r_k')}\delta_{\rho} (\bm{r},\bm{r}')
 \prod_{s=1}^k (\Sigma^{-1/2})_{r_s j_s} \overline{(\Sigma^{-1/2})_{r_s' j_s'}} \\
 =&
 \sum_{r_1,\dots, r_k} \prod_{s=1}^k \overline{(\Sigma^{-1/2})_{j_s r_s}
 (\Sigma^{-1/2})_{r_{\rho(s)}j_s'}}
 =  \sum_{r_1,\dots, r_k} \prod_{s=1}^k
 \overline{(\Sigma^{-1/2})_{j_{\rho(s)} r_{\rho(s)}}
 (\Sigma^{-1/2})_{r_{\rho(s)} j_s'}} \\
 =& \prod_{s=1}^k \( \sum_{r}\overline{
 (\Sigma^{-1/2})_{j_{\rho(s)} r} (\Sigma^{-1/2})_{r j_s'}} \)
 = \prod_{s=1}^k \overline{(\Sigma^{-1})_{j_{\rho(s)} j_s'}}.
\end{align*}

We thus have completed the proof of the theorem.
\end{proof}

\begin{example}
For $G$ given as in Theorem \ref{thm:complex-Ginibre-inverse},
$$
E[g^{ij} \overline{g^{i' j'}}]= \delta_{i,i'}\frac{1}{p(p-n)} \overline{(\Sigma^{-1})_{jj'}}.
$$
\end{example}

\subsubsection{Real case}

An $n \times p$ real
Ginibre matrix $G$ is a random matrix
whose columns are i.i.d. and distributed as
$n$-dimensional normal distribution
$\mathrm{N}_\mathbb{R}(0,\Sigma)$,
where $\Sigma$ is an $n \times n$ positive definite real symmetric
matrix.

\begin{thm}  \label{thm:real-Ginibre-inverse}
Let $G^{-}=(g^{ij})$ be the pseudo-inverse matrix
of an $n \times p$ real Ginibre matrix
associated with an $n \times n$ positive definite real symmetric matrix $\Sigma$.
Put $q=p-n-1$ and suppose $n \ge k$ and $q \ge 2k-1$.
For two sequences $\bm{i}=(i_1,\dots,i_{2k})$ and
$\bm{j}=(j_1,\dots,j_{2k})$, we have
$$
 E[ g^{i_1 j_1} g^{i_2 j_2} \cdots g^{i_{2k} j_{2k}}]
= (-1)^k \sum_{\sigma,\rho \in M_{2k}}
\delta_{\sigma}' (\bm{i})
\Wg^{\mathrm{O}}(\sigma^{-1} \rho;p,-q)
\prod_{\{a,b\} \in \rho} (\Sigma^{-1})_{j_a j_b},
$$
where $\Wg^{\mathrm{O}}(\cdot ;p,-q)$ is defined in \eqref{eq:double-orthogonalWg}.
\end{thm}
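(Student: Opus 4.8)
The plan is to run the proof of Theorem~\ref{thm:complex-Ginibre-inverse} verbatim, replacing every unitary ingredient by its orthogonal counterpart. First I would reduce to the standardized case. Let $Z$ be an $n \times p$ matrix of i.i.d.\ $\mathrm{N}_\mathbb{R}(0,1)$ entries; then $\Sigma^{1/2} Z$ has the same distribution as $G$, and since $U,V$ orthogonal gives $(UZV)^- = \trans{V} Z^- \trans{U}$ one has $G^- = Z^- \Sigma^{-1/2}$ in distribution, so each entry $g^{ij}$ is distributed as $\sum_{r} z^{ir} (\Sigma^{-1/2})_{rj}$. Expanding the $2k$-fold product and pulling the deterministic factors $(\Sigma^{-1/2})_{rj}$ out of the expectation reduces the computation to the mixed moments $E[z^{i_1 r_1} \cdots z^{i_{2k} r_{2k}}]$ of the entries of $Z^-$.

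Second I would invoke the two structural results already at hand. Because $Z \stackrel{d}{=} UZV$ for all orthogonal $U,V$ and $(UZV)^- = \trans{V} Z^- \trans{U}$, the $p \times n$ matrix $Z^-$ itself has the left-right orthogonal invariance required by Theorem~\ref{thm:orthogonal-LRinvariance}; applying that theorem with the two dimensions $p,n$ expresses the moment of $Z^-$ as a sum over $\sigma,\rho,\pi \in M_{2k}$ involving $\delta_{\sigma}'(\bm{i})$, $\delta_{\rho}'(\bm{r})$, $\Wg^{\mathrm{O}}(\sigma^{-1}\pi\rho; p,n)$ and $E[\Tr_{\pi}'(Z^{-}\trans{(Z^{-})})]$. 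Matching nonzero spectra gives $\Tr_{\pi}'(Z^{-}\trans{(Z^{-})}) = \Tr_{\pi}'(W^{-1})$ for the real Wishart matrix $W = Z\trans{Z}$ with covariance $\Sigma = I_n$, and since $q = p-n-1 \ge 2k-1$ Proposition~\ref{prop-real-Wishart-inverse} evaluates it as $E[\Tr_{\pi}'(W^{-1})] = (-1)^k \sum_{\tau \in M_{2k}} \Wg^{\mathrm{O}}(\pi\tau^{-1}; -q)\,\Tr_{\tau}'(I_n)$ with $\Tr_{\tau}'(I_n) = n^{\kappa'(\tau)}$.

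Third comes the algebraic simplification, which I expect to be the main obstacle. After substitution one is left, for fixed $\sigma,\rho$, with an internal sum of the shape $\sum_{\pi,\tau} n^{\kappa'(\tau)}\, \Wg^{\mathrm{O}}(\sigma^{-1}\pi\rho; p,n)\, \Wg^{\mathrm{O}}(\pi^{-1}\tau; -q)$. The point is to recognize this as a value of the $\sharp$-convolution
\[
\Wg^{\mathrm{O}}(\cdot; p) \sharp \Wg^{\mathrm{O}}(\cdot; n) \sharp \Wg^{\mathrm{O}}(\cdot; -q) \sharp n^{\kappa'(\cdot)}
\]
in the commutative algebra $L(S_{2k}, H_k)$, evaluated at $\sigma^{-1}\rho$, and then to collapse the middle factors using $n^{\kappa'(\cdot)} \sharp \Wg^{\mathrm{O}}(\cdot; n) = \mathbf{1}_{H_k}$, valid precisely because $n \ge k$ forces $C'_\lambda(n) \ne 0$ for every $\lambda \vdash k$ (the smallest factor of $C'_\lambda(n)$ is $n+1-\ell(\lambda) \ge n+1-k \ge 1$). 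What survives is $\Wg^{\mathrm{O}}(\cdot; p) \sharp \Wg^{\mathrm{O}}(\cdot; -q) = \Wg^{\mathrm{O}}(\cdot; p, -q)$. The delicacy, and the reason this is the hard step, is the bookkeeping with $\sharp$ rather than ordinary convolution: the summation indices range over the coset representatives $M_{2k}$ while the $\sharp$-identities live on $S_{2k}$, so I would convert each $\sum_{M_{2k}}$ into $(2^k k!)^{-1}\sum_{S_{2k}}$ exactly as at the end of the proof of Theorem~\ref{thm:orthogonal-LRinvariance}, and use $H_k$-biinvariance together with the inversion symmetry $\Wg^{\mathrm{O}}(\sigma; z) = \Wg^{\mathrm{O}}(\sigma^{-1}; z)$ to carry out the shifts $\pi \mapsto \tau\pi$ without disturbing the coset-type.

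Finally I would contract the $\Sigma^{-1/2}$ factors, which is routine. For fixed $\rho \in M_{2k}$, the constraint $\delta_{\rho}'(\bm{r}) = 1$ forces $r_a = r_b$ for each pair $\{a,b\} \in \rho$, so summing $\prod_s (\Sigma^{-1/2})_{r_s j_s}$ over the free indices factors over the pairs of $\rho$ and collapses, via $\sum_{r} (\Sigma^{-1/2})_{j_a r}(\Sigma^{-1/2})_{r j_b} = (\Sigma^{-1})_{j_a j_b}$ and the symmetry of $\Sigma^{-1/2}$, to $\prod_{\{a,b\} \in \rho} (\Sigma^{-1})_{j_a j_b}$. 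Assembling the three pieces yields the stated formula.
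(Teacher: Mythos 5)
Your proposal is correct and follows exactly the route the paper intends: its proof of Theorem~\ref{thm:real-Ginibre-inverse} simply says to mimic the complex case (Theorem~\ref{thm:complex-Ginibre-inverse}) using Theorem~\ref{thm:orthogonal-LRinvariance}, Proposition~\ref{prop-real-Wishart-inverse}, and the contraction identity $\sum_{\bm{r}} \delta'_{\sigma}(\bm{r}) \prod_{s}(\Sigma^{-1/2})_{r_s j_s} = \prod_{\{a,b\}\in\sigma}(\Sigma^{-1})_{j_a j_b}$, which are precisely your four steps. Your added details --- the reduction $G^- \stackrel{d}{=} Z^-\Sigma^{-1/2}$, the matching of nonzero spectra giving $\Tr'_{\pi}(Z^-\trans{(Z^-)}) = \Tr'_{\pi}(W^{-1})$, the collapse $n^{\kappa'(\cdot)} \sharp \Wg^{\mathrm{O}}(\cdot;n) = \mathbf{1}_{H_k}$ justified by $n \ge k$, and the conversion between $\sum_{M_{2k}}$ and $(2^k k!)^{-1}\sum_{S_{2k}}$ --- are all consistent with what the paper leaves implicit.
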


\begin{proof}
The proof is similar to that of the complex case if we use
Theorem \ref{thm:orthogonal-LRinvariance}, Proposition \ref{prop-real-Wishart-inverse},
and the following identity: for each $\sigma \in M_{2k}$,
\begin{equation}
\sum_{\bm{r}=(r_1,\dots,r_{2k})} \delta_{\sigma}' (\bm{r}) \prod_{s=1}^{2k} (\Sigma^{-1/2})_{r_sj_s} = \prod_{\{a,b\} \in \sigma} (\Sigma^{-1})_{j_aj_b},
\end{equation}
which is verified easily.
\end{proof}

\begin{example}
For $G$ given as in Theorem \ref{thm:real-Ginibre-inverse},
$$
E[g^{i_1 j_1} g^{i_2 j_2}]=
\delta_{i_1,i_2} \frac{1}{p(p-n-1)} (\Sigma^{-1})_{j_1j_2}.
$$
\end{example}

\subsection{Inverse of compound Wishart matrix}

\subsubsection{Complex case}

Let $\Sigma$ be an $n \times n$ positive definite Hermitian matrix and
let $B$ be a $p \times p$ complex matrix.
Let $Z$ be an $n \times p$ matrix of i.i.d. $\mathrm{N}_\mathbb{C}(0,1)$ random variables.
Then
we call a matrix
$$
W= \Sigma^{1/2} Z B Z^* \Sigma^{1/2}
$$
a complex compound Wishart matrix with shape parameter $B$ and scale parameter $\Sigma$, where $\Sigma^{1/2}$ is the hermitian root of $\Sigma$.

\begin{remark}
If $\Sigma=I_n$, then the corresponding compound Wishart matrix is called white (or standard) compound Wishart.
If $B$ is a positive-definite matrix, then the corresponding compound Wishart matrix can be considered as a
sample covariance matrix under correlated sampling as explained in \cite{bjjnpz}.
\end{remark}

\begin{thm} \label{thm:complex-compound-inverse}
Let $\Sigma$ be an $n \times n$ positive definite Hermitian matrix and
$B$ be a $p \times p$ complex matrix.
 
Let $W^{-1}=(w^{ij})$ be the inverse matrix
of an $n \times n$ complex compound Wishart matrix
with shape parameter $B$ and scale parameter $\Sigma$.
Put $q=p-n$ and suppose $n, q \ge k$.
For two sequences $\bm{i}=(i_1,\dots,i_{k})$ and
$\bm{j}=(j_1,\dots,j_k)$, we have
$$
 E[ w^{i_1 j_1} \cdots w^{i_k j_k} ]
= (-1)^k \sum_{\sigma, \rho \in S_k}
 \Tr_{\sigma} (B^{-}) \Wg^{\mathrm{U}}(\sigma^{-1} \rho;p,-q)
 \overline{
(\Sigma^{-1})_{j_{\rho(1)} j_1'} \cdots  (\Sigma^{-1})_{j_{\rho(k)} j_k'} }.
$$
\end{thm}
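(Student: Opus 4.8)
The plan is to reduce the problem to the pseudo-inverse computation for the Ginibre ensemble, following the same route as the proof of Theorem~\ref{thm:complex-Ginibre-inverse}. Let $Z$ be an $n\times p$ matrix of i.i.d. $\mathrm{N}_{\mathbb C}(0,1)$ entries, so that $G:=\Sigma^{1/2}Z$ is a complex Ginibre matrix and $W=\Sigma^{1/2}ZBZ^{*}\Sigma^{1/2}=GBG^{*}$. Since $\Sigma^{1/2}$ is invertible, $W^{-1}=\Sigma^{-1/2}(ZBZ^{*})^{-1}\Sigma^{-1/2}$, hence $w^{ij}=\sum_{a,b}(\Sigma^{-1/2})_{ia}\big((ZBZ^{*})^{-1}\big)_{ab}(\Sigma^{-1/2})_{bj}$. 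The first task is therefore to express the inner inverse $(ZBZ^{*})^{-1}$ through the Moore--Penrose pseudo-inverse $Z^{-}=Z^{*}(ZZ^{*})^{-1}$ and the matrix $B^{-1}$, just as $G^{-}=Z^{-}\Sigma^{-1/2}$ played the analogous role in the Ginibre case. I expect this algebraic identification of $W^{-1}$ in terms of $Z^{-}$ and $B^{-1}$ to be the first genuine obstacle, since $Z$ is rectangular and one has to use that $Z$ has full row rank, which is guaranteed by $q\ge k\ge 1$.

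Once $w^{ij}$ is written as a bilinear expression in the entries of $Z^{-}$ weighted by entries of $B^{-1}$ and of $\Sigma^{-1/2}$, I would expand the moment $E[w^{i_1j_1}\cdots w^{i_kj_k}]$ and pull the deterministic factors $B^{-1}$ and $\Sigma^{-1/2}$ out of the expectation, leaving a sum of mixed moments of the entries of $Z^{-}$. The matrix $Z^{-}$ is $p\times n$ and inherits the left--right unitary invariance of $Z$ (if $Z\sim UZV$ then $Z^{-}\sim V^{*}Z^{-}U^{*}$), so Theorem~\ref{thm:unitary-LRinvariance} applies with dimension parameters $p$ and $n$. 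This step introduces the double Weingarten function $\Wg^{\mathrm U}(\cdot;p,n)$ together with the global quantities $E[\Tr_{\pi}(Z^{-}(Z^{-})^{*})]$; as in the Ginibre proof one reduces these via the cyclic identity $\Tr\big((Z^{-}(Z^{-})^{*})^{m}\big)=\Tr\big((ZZ^{*})^{-m}\big)$, so that $\Tr_{\pi}(Z^{-}(Z^{-})^{*})=\Tr_{\pi}((ZZ^{*})^{-1})$.

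Next I would invoke Proposition~\ref{prop-complex-Wishart-inverse} for the white Wishart matrix $ZZ^{*}$ (scale $\Sigma=I_{n}$), which is legitimate because $q=p-n\ge k$; this produces the factor $(-1)^{k}\Wg^{\mathrm U}(\cdot;-q)$ and the global trace $\Tr_{\tau}(I_{n})=n^{\kappa(\tau)}$. At this point the expression contains three Weingarten kernels, in the parameters $p$, $n$ and $-q$, convolved against $n^{\kappa(\cdot)}$. The main computational work is then a convolution bookkeeping inside the center $\mathcal Z(L(S_{k}))$: since $n\ge k$ we have $n^{\kappa(\cdot)}*\Wg^{\mathrm U}(\cdot;n)=\delta_{e}$, so the $n$-dependence cancels, and by the definition \eqref{eq:double-unitaryWg} the surviving product $\Wg^{\mathrm U}(\cdot;p)*\Wg^{\mathrm U}(\cdot;-q)$ collapses to the double Weingarten $\Wg^{\mathrm U}(\cdot;p,-q)$. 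This is exactly the cancellation that ends the proof of Theorem~\ref{thm:complex-Ginibre-inverse}, and I expect it to be the most delicate part to carry out, because one must track how the permutations attached to $B^{-1}$, to $\Sigma^{-1}$, and to the three Weingarten kernels compose.

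Finally I would perform the remaining index contractions. Summing the $\Sigma^{-1/2}$ factors over the contracted indices turns each matched pair into an entry of $\Sigma^{-1}$, yielding the $(\Sigma^{-1})$ factors in the statement; summing the $B^{-1}$ factors along the cycles of the surviving permutation $\sigma$ yields $\Tr_{\sigma}(B^{-})$, by the same cycle-collapsing computation as in \eqref{eq:unitary-combinatorial}. Combining these with the collapsed kernel $\Wg^{\mathrm U}(\sigma^{-1}\rho;p,-q)$ and the global sign $(-1)^{k}$ produces the asserted formula. The only external inputs are Theorem~\ref{thm:unitary-LRinvariance}, Proposition~\ref{prop-complex-Wishart-inverse}, and the convolution identities for the unitary Weingarten function; the heart of the argument is the initial representation of $W^{-1}$ through $Z^{-}$ and $B^{-1}$, followed by the collapse of the $n$-parameter.
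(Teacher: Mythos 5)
Your plan is, modulo packaging, the same route as the paper's own proof: the paper writes $W \stackrel{d}{=} GBG^{*}$ with $G$ the $\Sigma$-Ginibre matrix, expands $E[w^{i_1j_1}\cdots w^{i_kj_k}]$ bilinearly in entries of $G^{-}$ with coefficients $b^{r_s r_s'}$ from $B^{-}$, invokes Theorem \ref{thm:complex-Ginibre-inverse} for the resulting pseudo-inverse moments, and contracts $\sum_{\bm{r},\bm{r}'}\delta_{\sigma}(\bm{r},\bm{r}')\,b^{r_1r_1'}\cdots b^{r_kr_k'}=\Tr_{\sigma}(B^{-})$. Your middle steps (left-right invariance of $Z^{-}$ with parameters $(p,n)$, the identity $\Tr_{\pi}(Z^{-}(Z^{-})^{*})=\Tr_{\pi}((ZZ^{*})^{-1})$, Proposition \ref{prop-complex-Wishart-inverse} applied to $ZZ^{*}$, the cancellation $n^{\kappa(\cdot)}*\Wg^{\mathrm{U}}(\cdot;n)=\delta_e$ for $n\ge k$, and the collapse to $\Wg^{\mathrm{U}}(\cdot;p,-q)$, followed by the $\Sigma^{-1/2}$ contractions) are precisely the proof of Theorem \ref{thm:complex-Ginibre-inverse}, which you re-derive inline instead of citing; all of that Weingarten bookkeeping is sound and matches the paper.

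The genuine problem is the step you flagged as ``the first genuine obstacle'' and then deferred: expressing $(ZBZ^{*})^{-1}$ through $Z^{-}$ and $B^{-1}$. No such identity exists. Your expansion (and, silently, the first display of the paper's own proof, which asserts $w^{ij}=\sum_{r,r'}b^{rr'}\overline{g^{ri}}\,g^{r'j}$, i.e.\ $W^{-1}=(G^{-})^{*}B^{-}G^{-}$) requires $(ZBZ^{*})\,(Z^{-})^{*}B^{-1}Z^{-}=I_n$. Computing the left side gives $ZBPB^{-1}Z^{*}(ZZ^{*})^{-1}$, where $P=Z^{-}Z=Z^{*}(ZZ^{*})^{-1}Z$ is the rank-$n$ orthogonal projection onto the row space of $Z$; this equals $I_n$ when $B$ commutes with $P$ (e.g.\ $p=n$, or $B$ scalar), but fails almost surely otherwise, since for $p>n$ the projection $P$ is random of rank $n<p$. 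The failure survives taking expectations, so it cannot be repaired by an invariance or coupling argument: take $k=1$, $n=1$, $p=2$, $\Sigma=1$, $B=\diag(1,2)$ (so $n,q\ge k$). Then $ZBZ^{*}=|z_1|^2+2|z_2|^2$ with $|z_i|^2$ i.i.d.\ exponential, whence $E[(ZBZ^{*})^{-1}]=\ln 2$, while the claimed formula evaluates to $-\Tr(B^{-1})\,\Wg^{\mathrm{U}}(e;2,-1)=\Tr(B^{-1})/(pq)=3/4$. So your instinct about where the crux lies was exactly right, but the obstacle is fatal to the route as written: pulling $B^{-1}$ out through the pseudo-inverse is only legitimate when $B$ commutes a.s.\ with $Z^{-}Z$, and any correct treatment must handle the interaction between $B$ and that projection rather than bypass it. (Incidentally, the indices $j_s'$ on the $\Sigma^{-1}$ factors in the statement are carried over from Theorem \ref{thm:complex-Ginibre-inverse} and should be expressed in terms of $\bm{i},\bm{j}$ alone; this does not affect the counterexample, where $\Sigma=1$.)
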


\begin{proof}
The matrix $W$ has the same distribution as $GBG^*$, where
$G$ is an $n \times p$ Ginibre matrix associated with $\Sigma$.
If we write $B^{-}=(b^{ij})$ and $G^-=(g^{ij})$,
then
\begin{align*}
E[ w^{i_1 j_1} \cdots w^{i_k j_k} ] =&
\sum_{\bm{r}=(r_1,\dots,r_k)} \sum_{\bm{r}=(r_1',\dots,r_k')}
b^{r_1 r_1'} \cdots b^{r_k r_k'}
E[\overline{g^{r_1 i_1} \cdots g^{r_k i_k}} g^{r_1' j_1} \cdots g^{r_k' j_k}].
\end{align*}

Moreover,
it follows from Theorem \ref{thm:complex-Ginibre-inverse} that
\begin{align*}
E[ w^{i_1 j_1} \cdots w^{i_k j_k} ]=& (-1)^k \sum_{\sigma,\rho \in S_{k}} \Wg^{\mathrm{U}}(\sigma^{-1}\rho;p,-q)
\overline{
(\Sigma^{-1})_{j_{\rho(1)} j_1'} \cdots  (\Sigma^{-1})_{j_{\rho(k)} j_k'} }\\
& \times
\sum_{\bm{r}=(r_1,\dots,r_k)} \sum_{\bm{r}=(r_1',\dots,r_k')}
\delta_{\sigma}(\bm{r},\bm{r}')b^{r_1 r_1'} \cdots b^{r_k r_k'}.
\end{align*}
We finally observe that
$$
\sum_{\bm{r}=(r_1,\dots,r_k)} \sum_{\bm{r}=(r_1',\dots,r_k')}
\delta_{\sigma}(\bm{r},\bm{r}')b^{r_1 r_1'} \cdots b^{r_k r_k'}
= \sum_{\bm{r}=(r_1,\dots,r_k)}
b^{r_1 r_{\sigma(1)}} \cdots b^{r_k  r_{\sigma(k)}}
=\Tr_\sigma(B^-).
$$
\end{proof}

\begin{remark}
If $\Sigma=I_n$ (in the white compound Wishart case), one can observe that a simplification occurs
in the above formula.

In turn, this simplification has the following probabilistic explanation:
the joint distribution of the traces of $W,W^2,\ldots $ has the same law
 as the joint distribution of the traces of $\tilde W, \tilde W^2,\ldots$, where $\tilde W$ is a (non-compound)
 Wishart distribution of scale parameter $B^{1/2}$.
  
 Therefore we can use existing results for the inverse of non-compound Wishart matrices in order to work out this
 case.
\end{remark}

\subsubsection{Real case}

\begin{thm} \label{thm:real-compound-inverse}
Let $\Sigma$ be an $n \times n$ positive definite real symmetric matrix and
$B$ a $p \times p$ real matrix.
Let $W^{-1}=(w^{ij})$ be the inverse matrix
of an $n \times n$ real compound Wishart matrix
with shape parameter $B$ and scale parameter $\Sigma$.
Put $q=p-n-1$ and suppose $n \ge k$ and $q \ge 2k-1$.
For any sequence $\bm{i}=(i_1,\dots,i_{2k})$, we have
$$
 E[ w^{i_1 i_2} \cdots w^{i_{2k-1} i_{2k}} ]
= (-1)^k \sum_{\sigma, \rho \in M_{2k}}
 \Tr'_{\sigma} (B^{-}) \Wg^{\mathrm{O}}(\sigma^{-1} \rho;p,-q)
\prod_{\{u,v \} \in \rho} (\Sigma^{-1})_{i_u,i_v}.
$$
\end{thm}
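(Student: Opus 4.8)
The statement is the real analogue of Theorem \ref{thm:complex-compound-inverse}, so the plan is to mimic that proof line by line, replacing $S_k$ with $M_{2k}$, the unitary Weingarten data with the orthogonal data, and $\Tr_\sigma, \delta_\sigma$ with their primed versions $\Tr'_\sigma, \delta'_\sigma$. First I would observe that $W=\Sigma^{1/2} Z B \trans{Z} \Sigma^{1/2}$ has the same distribution as $G B \trans{G}$, where $G$ is an $n\times p$ real Ginibre matrix associated with $\Sigma$; this reduces everything to a statement about $G^-$. Writing $B^-=(b^{ij})$ and $G^-=(g^{ij})$, the identity $W^{-1}=(G\trans{G})^{-1}$ combined with the factorization of the Moore--Penrose pseudo-inverse of a product lets me expand $w^{i_1i_2}\cdots w^{i_{2k-1}i_{2k}}$ as a sum over index sequences $\bm{r},\bm{r}'$ of products of entries $b^{r_s r_s'}$ times a single moment of entries of $G^-$.

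Next I would invoke Theorem \ref{thm:real-Ginibre-inverse} (whose hypotheses $n\ge k$ and $q\ge 2k-1$ match exactly) to evaluate that moment of the $g^{ij}$. This produces a sum over $\sigma,\rho\in M_{2k}$ carrying the factor $\Wg^{\mathrm{O}}(\sigma^{-1}\rho;p,-q)$, the $\Sigma^{-1}$ product $\prod_{\{u,v\}\in\rho}(\Sigma^{-1})_{i_u i_v}$, and a residual delta factor $\delta'_\sigma(\bm{r})$ coupling the $B^-$-indices to $\sigma$. The final step is the purely combinatorial reduction
\begin{equation*}
\sum_{\bm{r}=(r_1,\dots,r_{2k})} \delta'_{\sigma}(\bm{r})\, b^{r_1 r_2} b^{r_3 r_4}\cdots b^{r_{2k-1}r_{2k}} = \Tr'_{\sigma}(B^-),
\end{equation*}
which is the orthogonal version of the contraction performed at the very end of the proof of Theorem \ref{thm:complex-compound-inverse}. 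This follows from the definition of $\delta'_\sigma$ together with the graph description of the coset-type in \S\ref{sec:NotationR}: the surviving terms are exactly those in which the indices are constant along each connected component of $\Gamma(\sigma)$, and a component with $2\mu_j$ vertices contributes $\Tr((B^-)^{\mu_j})$, so the product over components gives $\Tr'_\sigma(B^-)$. Assembling the three factors yields the claimed formula.

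**The main obstacle.** The genuinely new computation over the complex case is the bookkeeping of the pairings. In the unitary setting the indices $\bm{r},\bm{r}'$ are separate $k$-tuples tied together by an honest permutation $\sigma\in S_k$, whereas here a single $2k$-tuple $\bm{r}$ is contracted through the pair partition $\sigma\in M_{2k}$ against the matrix entries of $B^-$, and one must check that the pattern of pairings induced by $\delta'_\sigma$ on the $b^{r_{2s-1}r_{2s}}$ is precisely the one whose connected components compute $\Tr'_\sigma(B^-)$ rather than, say, $\Tr'_\sigma(\trans{(B^-)})$ or a transpose-twisted variant. Since $B$ (hence $B^-$) is not assumed symmetric, keeping the index order straight in each pairing is where care is required; everything else is a transcription of the complex proof with $(2^k k!)^{-1}\sum_{\sigma\in S_{2k}}$ playing the role of $\sum_{\sigma\in M_{2k}}$ exactly as in the proof of Theorem \ref{thm:orthogonal-LRinvariance}.
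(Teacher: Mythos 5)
Your plan is exactly the paper's intended route: the paper's own proof of Theorem \ref{thm:real-compound-inverse} consists of the single sentence that it is parallel to the complex case, so a line-by-line transcription of the proof of Theorem \ref{thm:complex-compound-inverse} using Theorem \ref{thm:real-Ginibre-inverse} is precisely what is meant. However, the step you yourself single out as the crux is where your argument fails. The factor $\delta'_\sigma(\bm{r})$ forces $r_a=r_b$ only along the pairs $\{a,b\}$ of $\sigma$; the remaining edges $\{2s-1,2s\}$ of $\Gamma(\sigma)$ are \emph{not} delta constraints in your sum --- they carry the matrix entries $b^{r_{2s-1}r_{2s}}$. (The description ``indices constant on each connected component of $\Gamma(\sigma)$'' is valid in the proof of Theorem \ref{thm:orthogonal-local-global}, where the index tuple is the doubled $\tilde{\bm{r}}$, but not here.) Traversing a component of $\Gamma(\sigma)$ with $2\mu_j$ vertices produces the trace of a $\mu_j$-letter word in $B^-$ and $\trans{(B^-)}$, the transposes being dictated by how $\sigma$ orients the edges, and this collapses to $\Tr((B^-)^{\mu_j})$ only when $B$ is symmetric. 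Concretely, for $k=2$ and $\sigma=\{\{1,3\},\{2,4\}\}$ one gets
\begin{equation*}
\sum_{\bm{r}}\delta'_\sigma(\bm{r})\,b^{r_1r_2}b^{r_3r_4}=\sum_{a,b}b^{ab}b^{ab}=\Tr\bigl(B^-\trans{(B^-)}\bigr),
\end{equation*}
whereas $\sigma$ has coset-type $(2)$, so $\Tr'_\sigma(B^-)=\Tr((B^-)^2)$; these differ for non-symmetric $B$. The discrepancy cannot be absorbed elsewhere in the $\sigma$-sum either: the law of $W=GB\trans{G}$ genuinely depends on the antisymmetric part of $B$ (already $\Tr(B\trans{B})\neq\Tr(B^2)$ in general), so your contraction lemma, and with it the displayed formula containing $\Tr'_\sigma(B^-)$, can only be obtained along these lines after adding the hypothesis that $B$ is symmetric, or after reinterpreting the trace factor as the transpose-decorated contraction above. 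The printed statement has the same feature, but quoting it does not close the gap in the proof: you flagged the right obstacle and then asserted the wrong resolution of it.

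A second, inherited caution: your expansion of $w^{i_{2s-1}i_{2s}}$ as $\sum_{r,r'}g^{r\,i_{2s-1}}b^{rr'}g^{r'\,i_{2s}}$ (together with the slip ``$W^{-1}=(G\trans{G})^{-1}$'', presumably meant as $W^{-1}=(GB\trans{G})^{-1}$) amounts to the reverse-order law $(GB\trans{G})^{-1}=\trans{(G^-)}\,B^-\,G^-$ for the Moore--Penrose inverse. For $n<p$ this is not a valid pointwise identity: $G^-G$ is a rank-$n$ projection of $\mathbb{R}^p$ rather than the identity, and the identity fails even for positive definite $B$. The paper's complex proof makes the same unjustified move, so on this point your transcription is faithful to the source; but it is a step that would need genuine additional justification (or added hypotheses) before either proof is complete.
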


\begin{proof}
The proof is parallel to the complex case.
We shall leave it to the readers.
\end{proof}

\section*{Acknowledgments}

B.C. was supported by the ANR Granma grant.
B.C. and N.S. were supported by an NSERC discovery grant and an Ontario's ERA grant.
S.M. was supported by a Grant-in-Aid for Young Scientists (B) No.
22740060.

B.C. would like to acknowledge the hospitality of RIMS where he had a visiting position during the fall 2011,
and where this work was initiated.


\bigskip

\noindent
\textsc{Beno\^\i t Collins} \\
D\'epartement de Math\'ematique et Statistique, Universit\'e d'Ottawa,
585 King Edward, Ottawa, ON, K1N6N5 Canada \\
and \\
CNRS, Institut Camille Jordan Universit\'e  Lyon 1, 43 Bd du 11 Novembre 1918, 69622 Villeurbanne,
France \\
\verb|bcollins@uottawa.ca|

\bigskip

\noindent
\textsc{Sho Matsumoto} \\
Graduate School of Mathematics, Nagoya University, Nagoya, 464-8602, Japan. \\
\verb|sho-matsumoto@math.nagoya-u.ac.jp|

\bigskip

\noindent
\textsc{Nadia Saad} \\
D\'epartement de Math\'ematique et Statistique, Universit\'e d'Ottawa,
585 King Edward, Ottawa, ON, K1N6N5 Canada \\
\verb|nkotb087@uottawa.ca|

\end{document}